\newtheorem{thm}{Theorem}[section]
\newtheorem{lem}[thm]{Lemma}
\newtheorem{remark}[thm]{Remark}
\newtheorem{prop}[thm]{Proposition}
\newtheorem{conj}[thm]{Conjecture}
\numberwithin{equation}{section}
\newcommand{\floor}[1]{\left\lfloor #1 \right\rfloor}
\newcommand{\df}{\overset{\rm{def}}{=}}
\newcommand{\R}{{\mathbb{R}}}
\newcommand{\Z}{{\mathbb{Z}}}
\newcommand{\N}{{\mathbb{N}}}
\newcommand{\e}{\varepsilon}
\DeclareMathOperator{\BBA}{\widetilde{d-BA}}
\DeclareMathOperator{\BA}{\widetilde{2-BA}}
\DeclareMathOperator{\Ba}{2-BA}
\DeclareMathOperator{\BBa}{d-BA}
\DeclareMathOperator{\ba}{BA}
\begin{document}

\begin{abstract}
In this paper we study the classical Schmidt game on two families of sets: one related to frequencies of digits in base-$2$ expansions, and one connected to the set of the badly approximable numbers. Namely, we describe some nontrivial winning and losing parameters $(\alpha, \beta)$ for these sets.
\end{abstract}

\title[On Nontrivial Winning and Losing Parameters of Schmidt Games]{On Nontrivial Winning and Losing Parameters of Schmidt Games}
\author[Neckrasov]{Vasiliy Neckrasov}
\address{Brandeis University, Waltham, MA 02453, USA}
\email{vneckrasov@brandeis.edu}

\author[Zhan]{Eric Zhan}
\address{Massachusetts Institute of Technology, Cambridge, MA 02139, USA}
\email{ezhan@mit.edu}
\maketitle

\section{Set-up and Introduction: Schmidt Games}\label{sg}

\bigskip

Wolfgang Schmidt first introduced Schmidt games in \cite{S} in order to study the set of badly approximable numbers. These games involve two players, Alice and Bob and two parameters $\alpha,\beta\in (0,1)$. A subset $S$ of a metric space is called $(\alpha,\beta)$-winning if Alice has a strategy guaranteeing that a certain point, constructed as a result of the infinite series of moves, lies in $S$. Of particular interest are sets that are $(\alpha,\beta)$-winning for all $\beta$ and a fixed $\alpha$; those are called $\alpha$-winning sets. Schmidt proved that $\alpha$-winning sets have full Hausdorff dimension, and a countable intersection of $\alpha$-winning sets is $\alpha$-winning. The technique of Schmidt games has been frequently used to prove that certain sets arising in number theory and dynamical systems have full Hausdorff dimension.  Many of the applications of Schmidt games deal with various stronger modifications of Schmidt's original construction. These modifications include the McMullen's strong and absolute winning games \cite{mcmullen}, with an additional property that the winning property is preserved by quasisymmetric homeomorphisms; hyperplane absolute winning game \cite{HAW}, where the winning property holds under $C^1$ diffeomorphisms; potential game \cite{FSU18}, Cantor game \cite{BH17}, and Hausdorff/packing games  \cite{variational} which give a tool to precisely calculate Hausdorff/packing dimensions. For a detailed survey describing the setups of some of the aforementioned games and connections between them, see \cite{bhns18}. However, in this paper, we restrict our attention to the $(\alpha,\beta)$-game introduced in \cite{S}.

Here, we briefly describe the setup of the classical Schmidt $(\alpha, \beta)$-game. Throughout this paper we work with the Schmidt game on $\mathbb{R}$, so for simplicity we give the definitions only in this special case.

Denote by $I$ the open unit square: $$I \df \{(\alpha,\beta): 0 < \alpha,\beta < 1\} = (0,1)\times(0,1)\,.$$
Then, fix a set $S \subset \R$ and $(\alpha,\beta)\in I$. Two players, Alice and Bob, can play a Schmidt game on $S$, taking alternating turns, as follows: Bob begins by choosing a closed interval $B_0$. Given an interval $B_i$, Alice chooses a closed interval $A_i \subset B_i$ such that $|A_i| = \alpha|B_i|$. Similarly, given an interval $A_i$, Bob chooses an interval $B_{i+1} \subset A_i$ such that $|B_{i+1}| = \beta|A_i|$. Let
$$
x_\infty \df \bigcap_{k = 1}^\infty  B_k = \bigcap_{k = 1}^\infty  A_k.$$
We call $S$ $(\alpha,\beta)$\textit{-winning} if, regardless of how Bob chooses his intervals, Alice can play in such a way that $x_\infty \in S$. 

\vskip 0.2 cm

In this paper, we will often fix the set $S$ and explore the games for different pairs $(\alpha, \beta)$. To do this, we will utilize a little bit different language.

Let's fix the set $S$. A point $(\alpha, \beta)\in I$ is called \textit{winning} for $S$ if $S$ is $(\alpha, \beta)$-winning and \textit{non-winning} for $S$ otherwise. A point $(\alpha, \beta)\in I$ is called \textit{losing} for $S$ if, regardless of how Alice chooses her intervals, Bob can play in such a way that $x_\infty \notin S$, and \textit{non-losing} otherwise.

\vskip 0.2 cm

It will be crucial for some of our proofs to know that for a given set $S$ and any parameters $(\alpha, \beta)$ one of the players always has a strategy to win; that is, we will require that all the non-losing parameters are winning, and all the non-winning parameters are losing. In this case, we will say that the Schmidt game is \textit{determined} on the set $S$. In general, Schmidt game can be not determined: see \cite{AFG23} for some constructions using the Axiom of choice. However, it is known (\cite{CFJ23} ) that the Schmidt game in $\mathbb{R}$ is determined if we assume the Axiom of determinacy AD (which contradicts the Axiom of choice). More specifically, it is known that the Schmidt game is determined on Borel sets.

\vskip 0.2 cm

For every $S$, define the \textit{Schmidt diagram} $D(S)$ of $S$ as the set of all winning $(\alpha,\beta)\in I$. In his original paper, Schmidt proved that there are two ``trivial zones'' of Schmidt diagrams:

\begin{lem}\label{trivial}\cite{S}
\begin{itemize}
\item[\rm(a)]   If $S$ is dense, then $D(S)$ contains the set $$\check D :=  \big \{(\alpha,\beta) \in I : \beta \ge (2-\alpha)^{-1}\big\},$$ with equality   if $S$ is   countable (see Figure~\ref{Fig:Data1}).
\smallskip

\item[\rm(b)]   If $S\subsetneq\R$, then $D(S)$ is contained in   the set $$\hat D := \big\{(\alpha,\beta) \in I : \beta > 2 - \alpha^{-1}\big\},$$ with equality   if the complement of $S$ is   countable (see Figure~\ref{Fig:Data2}).
\end{itemize}
\end{lem}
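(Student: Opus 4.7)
\medskip

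\textbf{Plan.} Both parts of the lemma consist of an inclusion and an equality claim, and the two pairs are dual under $(\alpha,\beta)\leftrightarrow(\beta,\alpha)$ together with swapping the roles of the two players. I would prove all four assertions by exhibiting explicit strategies: ``steering'' strategies for the two inclusions and ``dodging'' strategies for the two equality cases.

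For the inclusion $\check D \subseteq D(S)$ in (a), note that $\beta \ge (2-\alpha)^{-1}$ rearranges to $\beta(1-\alpha) \ge 1-\beta$, which directly compares Alice's wiggle room $(1-\alpha)|B_i|$ for placing $A_i \subset B_i$ with Bob's wiggle room $(1-\beta)|A_i|$ for placing $B_{i+1} \subset A_i$. Using density of $S$, Alice picks at each round a target $s_i \in S$ inside $B_i$ close to a prescribed central position, and places $A_i$ with $s_i$ in the corresponding central region of $A_i$. The wiggle-room inequality is exactly what is needed to show that the successive targets form a Cauchy sequence converging to some $s^\ast \in S$, and that this limit coincides with $x_\infty$.

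For the equality in (a) when $S = \{s_1, s_2, \ldots\}$ is countable and $\beta < (2-\alpha)^{-1}$: Bob dodges the enumeration, at round $i+1$ pushing $B_{i+1}$ entirely to one side of $s_{i+1}$ inside $A_i$. A direct interval computation shows the strict hypothesis always leaves enough room on at least one side for him to do so; hence $x_\infty \ne s_k$ for every $k$, so $x_\infty \notin S$. Part (b) is then proved by the symmetric arguments with the roles of Alice/Bob and $\alpha/\beta$ swapped: for the inclusion, if $\beta \le 2-\alpha^{-1}$, pick $p \notin S$ and let Bob play the dual centering strategy, using the dual wiggle-room inequality $\alpha(1-\beta) \ge 1 - \alpha$ to force $x_\infty = p$; for the equality when $S^c = \{p_1, p_2, \ldots\}$ is countable and $\beta > 2-\alpha^{-1}$, Alice dodges the enumeration by placing $A_i \subset B_i$ entirely to one side of $p_i$ at round $i$.

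The main obstacle is the pair of steering arguments (Alice's in (a), Bob's in (b)). Unlike the routine dodging arguments, which reduce to a one-step interval-arithmetic check, the steering strategies require a convergence analysis showing that the dynamically refined target actually equals the limit point $x_\infty$, not merely a point in $\overline S$ (respectively, distinct from some approximating sequence). One must also handle carefully the case where the current target lies close to the boundary of the current interval. The sharp thresholds $(2-\alpha)^{-1}$ and $2-\alpha^{-1}$ arise precisely as the critical values at which the two players' wiggle rooms balance.
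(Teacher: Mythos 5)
First, a caveat about the comparison: the paper does not prove Lemma~\ref{trivial} at all --- it is quoted from Schmidt's original article \cite{S} --- so your proposal can only be judged on its own terms. Judged that way, the architecture (two steering arguments for the inclusions, two dodging arguments for the equalities, related by the $(\alpha,\beta)\leftrightarrow(\beta,\alpha)$ duality) is the right one, but the parts you dismiss as ``routine dodging arguments, which reduce to a one-step interval-arithmetic check'' are exactly where the proposal breaks. In part (a), you have Bob push $B_{i+1}$ entirely to one side of $s_{i+1}$ inside $A_i$ in a single round. This is impossible whenever $s_{i+1}$ lies at the centre of $A_i$ and $\beta>1/2$: both components of $A_i\setminus\{s_{i+1}\}$ then have length $|A_i|/2<\beta|A_i|$, so neither can contain $B_{i+1}$. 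Since $(2-\alpha)^{-1}>1/2$ for every $\alpha\in(0,1)$, the strip $1/2<\beta<(2-\alpha)^{-1}$ is a nonempty part of the region you must cover, and there the one-step dodge fails outright; the same failure occurs in the part (b) equality whenever $\alpha>1/2$, and $\hat D$ contains such points. The correct dodge is a multi-round drift argument: with $\gamma_B=1-2\beta+\alpha\beta$, the hypothesis $\beta<(2-\alpha)^{-1}$ is exactly $\gamma_B>0$, and over one full round Bob can displace the centre of $A_t$ by at least $\tfrac12\gamma_B|A_t|$ in a direction of his choosing no matter how Alice replies; summing the geometric series $\sum_t|A_t|$, finitely many rounds suffice to expel $s_{i+1}$ permanently from the nested intervals before he turns to $s_{i+2}$. (This is precisely the computation the paper isolates later as Lemma~\ref{escape}, with $\gamma_A$ playing the dual role for Alice in part (b); the thresholds come from the signs of $\gamma_A,\gamma_B$, not from a single-move comparison.)

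The steering halves are also still only a plan. You correctly identify the obstruction --- a Cauchy sequence of targets $s_i\in S$ only yields $x_\infty\in\overline S=\R$, which is vacuous, and for $S=\Q$ the equality statement shows the result is sharp --- but you do not supply the resolution, which is that for $\gamma_B\le 0$ Alice can lock onto a \emph{single} $s\in S$ and maintain, by an explicit margin invariant, that $s$ stays at distance at least $(1-\beta)|A_i|$ from both endpoints of every subsequent $A_i$, so that Bob can never exclude it and $x_\infty=s$. Until that invariant is stated and shown to propagate through one round of play (this is where the inequality $\beta(1-\alpha)\ge 1-\beta$ is actually consumed), neither inclusion is proved. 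So as it stands the proposal identifies the right ingredients but proves none of the four assertions; the dodging claims as written are false in part of the parameter range, and the steering claims are unfinished at the step you yourself flag as the main obstacle.
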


\begin{figure}[!htb]
   \begin{minipage}{0.45\textwidth}
     \centering
     \includegraphics[width=.7\linewidth]{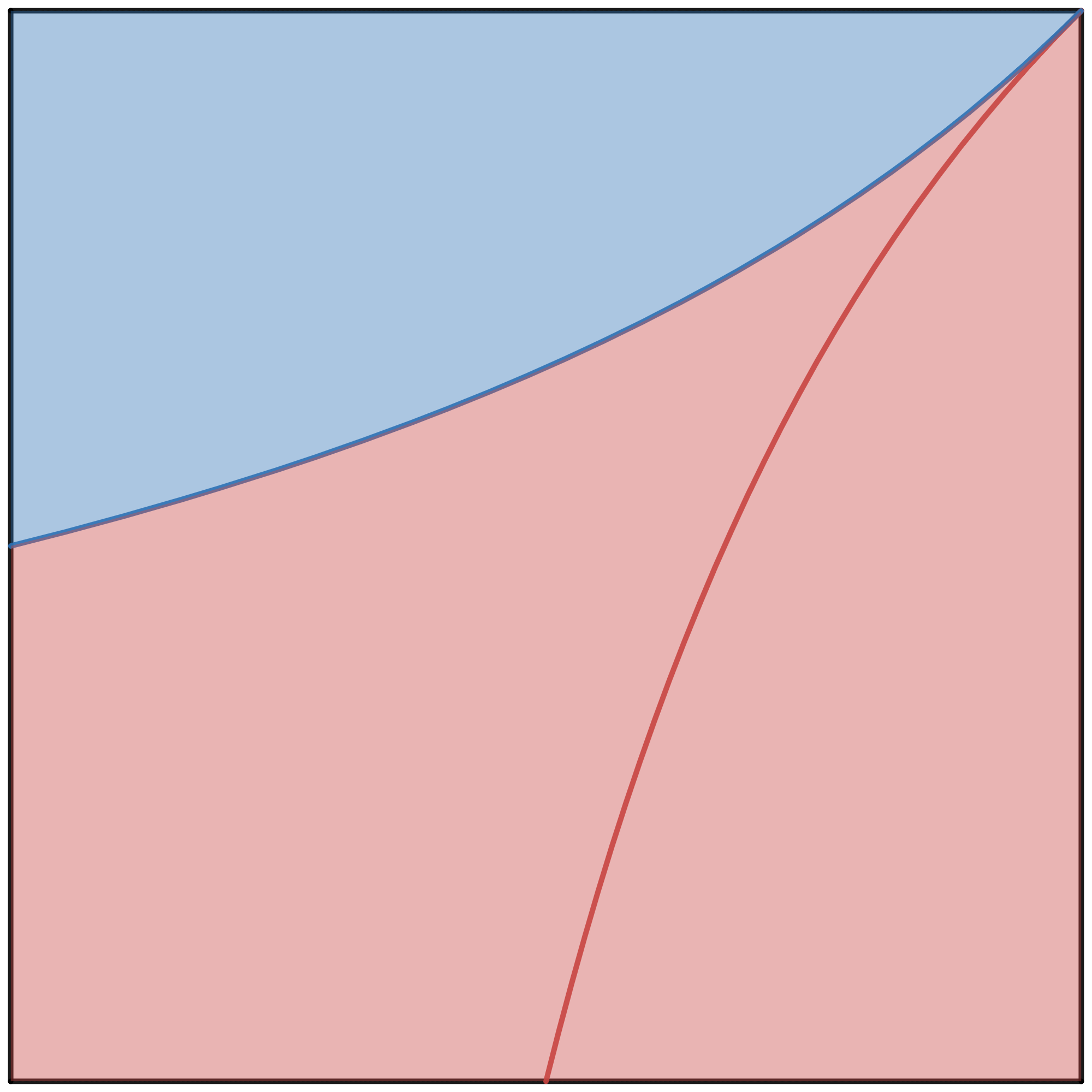}
     \caption{$\check D$ (blue)}\label{Fig:Data1}
   \end{minipage}\hfill
   \begin{minipage}{0.45\textwidth}
     \centering
     \includegraphics[width=.7\linewidth]{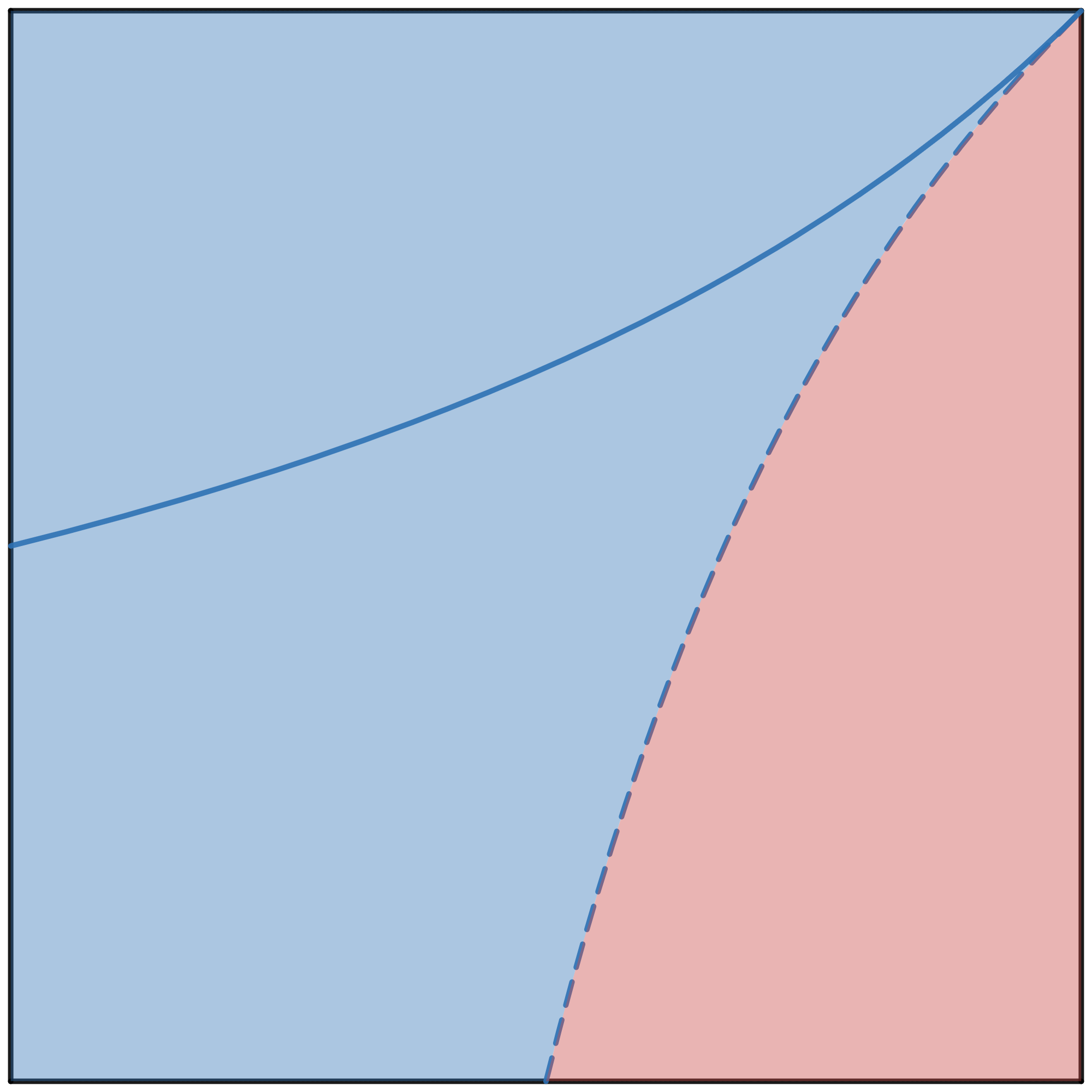}
     \caption{$\hat D$ (blue)}\label{Fig:Data2}
   \end{minipage}
\end{figure}

There are only four sets that are known to coincide with $D(S)$ for some $S$: $\emptyset$, $\check D$, $\hat D$, and $I$. It is clear that this is far not the complete list of the possible Schmidt diagrams, see \cite{F} for an example of the set which is proven to have a nontrivial diagram, however, no diagram $D(S)$ different from the four diagrams discussed above was completely described. It is worth noting that the problem of completely describing a nontrivial Schmidt diagram was investigated in an earlier PRIMES paper \cite{KZT}, which investigated sets relating to particular decimal expansions.

In this paper, we partially describe the nontrivial Schmidt diagrams of two families of sets, and make some conjectures about their precise form.

\vskip 0.2 cm

The structure of the paper is as follows. In Section~\ref{newres}, we formulate our main results. We describe some winning and losing conditions for families of sets connected with digit frequencies in the dyadic expansion of real numbers in Subsection \ref{freq}, and for families of sets connected with badly approximable numbers in base $B$ expansion in Subsection \ref{bbares}. 
In Section \ref{general}, we discuss some general properties of winning sets and general symmetric strategies on special families of sets, which we need to prove Theorem~\ref{thmdf}. We prove Theorem~\ref{thmdf} in Section \ref{freqproofs} and the theorems from Subsection \ref{bbares} in Section~\ref{2ba}.

\section{Main results} \label{newres}

\subsection{Digit frequencies and related sets}\label{freq}

We will investigate a construction regarding frequencies of digits in base-$2$ expansions of real numbers. These sets have been studied in the past (see \cite{V, FFNS}). 

First, some setup. Consider the base-$2$ expansions of the form $x = x_0,x_1x_2 \cdots$ where $x_0$ is an integer and $x_i \in \{0, 1\}$ are the digits in the base-$2$ expansion of $x$. In this paper, if $x$ can be represented by two separate expansions, we take the lexicographically minimal expansion (for example, $0.100\ldots$ would instead be written as $0.011\ldots$). We define
$$
d^+(x, j) = \limsup_{k \rightarrow \infty} \frac{\#\{1 \leq i \leq k : x_i = j\}}k
$$
and similarly,
$$
d^-(x, j) = \liminf_{k \rightarrow \infty} \frac{\#\{1 \leq i \leq k : x_i = j\}}k.
$$

Define the sets
\begin{align*}
F^+_c &:= \{ x \in \R: d^+(x, 0) > c\} ,\\
F^-_c &:= \{ x \in \R: d^-(x, 0) > c\}.
\end{align*}

It is known (\cite{Bor}) that the set $\{ x \in \mathbb{R}: \,\, d^-(x, j) = d^+(x, j) = \frac{1}{2} \}$ has full Lebesgue measure for any $j \in \{0, 1 \}$. In particular, the sets $F^{\pm}_c$ have full Lebesgue measure if $c < \frac{1}{2}$ and measure zero otherwise. Moreover, Besicovitch (\cite{B35}; see also \cite{Olsen} for generalizations) showed that the Hausdorff dimension of $F^{-}_c$ for $c > 1$ is equal to $-c \log_2c - (1-c) \log_2(1-c)$. 

Winning and losing parameters of these sets were studied before. The following fact is not stated, but follows from the proof of Theorem 2.15 in \cite{FFNS}.

\begin{prop}\cite{FFNS}
Let $c \in (0, 1)$. The set $F^+_c$ is winning for $\log(8\alpha)/\log(\alpha\beta) > c$ and losing for $\log(\alpha/8)/\log(\alpha\beta) \leq c$.
\end{prop}

The results of our research expand on that bound:

\begin{thm}\label{thmdf}
  Let $c \in (0, 1)$, and suppose  $\log_2{\alpha\beta} \notin \mathbb{Q}$. Then, the sets $F^{\pm}_c$ are $(\alpha, \beta)$-winning if $\log(4\alpha)/\log(\alpha\beta) > c$, and $(\alpha, \beta)$-losing if $\log(\alpha/4)/\log(\alpha\beta) \leq c$. In addition: 

    \begin{enumerate}
        \item If $c < \frac{1}{2}$, then the set $F^+_c$ is $(\alpha, \beta)$-winning for any $(\alpha, \beta)$ such that $\beta > \alpha$ and $\log_2 (\alpha \beta) \notin \mathbb{Q}$.
        \item If $c \geq \frac{1}{2}$, then the set $F^-_c$ is $(\alpha, \beta)$-losing for any  $(\alpha, \beta)$ such that $alpha \geq \beta$. Here we do not require the restriction $\log_2{\alpha\beta} \notin \mathbb{Q}$.
    \end{enumerate}
\end{thm}

Let us also notice that Schmidt's original approach (see Corollary 1 of Theorem 6 in \cite{S}) automatically gives us some losing zone bounds as a function of Hausdorff dimension. Namely, for any set $S \subseteq \mathbb{R}$ the following statement holds: if $\dim_H S = s$, and $\frac{\log \lfloor \beta^{-1} \rfloor}{-\log(\alpha \beta)}>s$, then $S$ is $(\alpha, \beta)$-non-winning. Using the fact that $\dim_H F_c^{-} = -c \log_2c - (1-c) \log_2(1-c)$ for $c > \frac{1}{2}$, we automatically get the following statement:

\begin{prop}\label{estfromdim}
    Let $c > \frac{1}{2}$, and suppose $\frac{\log \lfloor \beta^{-1} \rfloor}{-\log(\alpha \beta)}>-c \log_2c - (1-c) \log_2(1-c)$. Then, the set $F_c^{-}$ is $(\alpha, \beta)$-losing.
\end{prop}

The winning and losing zones for selected values of $c$ are shown at figures ~\ref{Fig:Data3},  ~\ref{Fig:Data4},  ~\ref{Fig:Data5} and  ~\ref{Fig:Data6}. The winning zones from Theorem \ref{thmdf} are in blue, the losing zones from Theorem \ref{thmdf} are in red and the losing zones from Proposition \ref{estfromdim} are in green. We note that the losing conditions in Theorem \ref{thmdf} are always stronger that the ones from Proposition \ref{estfromdim} for small enough $\alpha$.

\vskip 0.2 cm

\begin{figure}[h!]
   \begin{minipage}{0.45\textwidth}
     \centering
     \includegraphics[width=.7\linewidth]{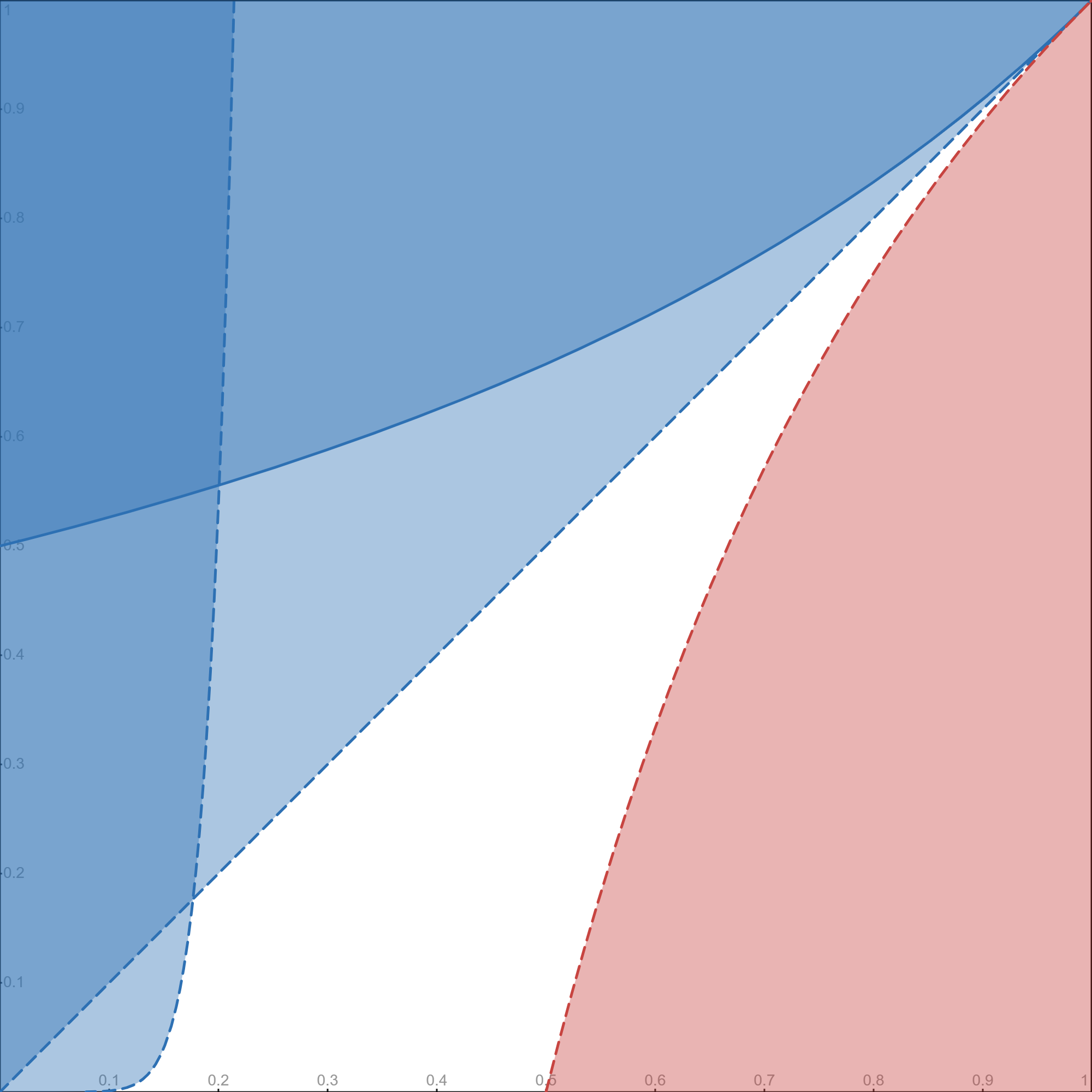}
     \caption{Bounds for $D(F^+_c)$ with $c = 0.1$}\label{Fig:Data3}
   \end{minipage}\hfill
   \begin{minipage}{0.45\textwidth}
     \centering
     \includegraphics[width=.7\linewidth]{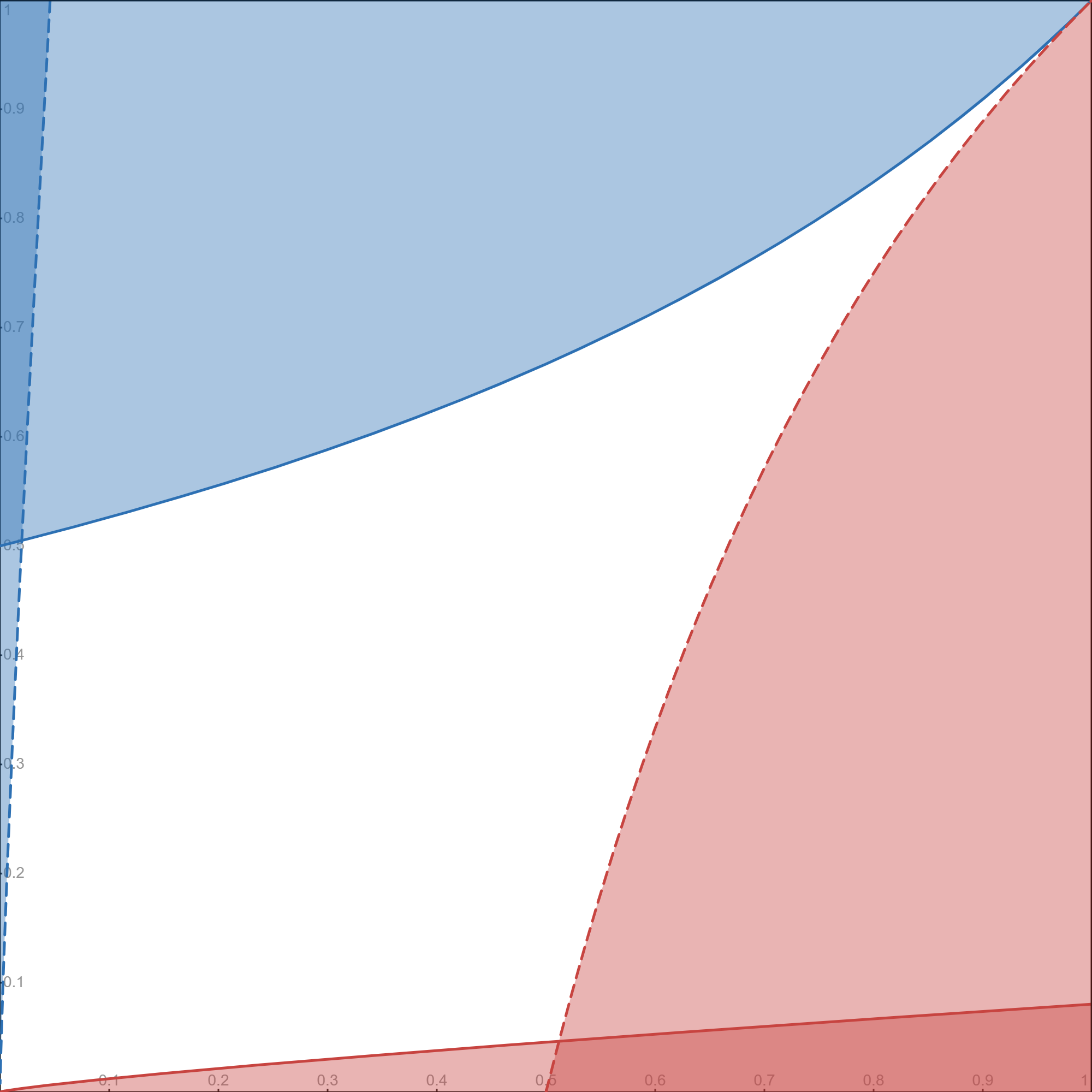}
     \caption{Bounds for $D(F^+_c)$ with $c = 0.55$}\label{Fig:Data4}
   \end{minipage}
\end{figure}
\begin{figure}[h!]
   \begin{minipage}{0.45\textwidth}
     \centering
     \includegraphics[width=.7\linewidth]{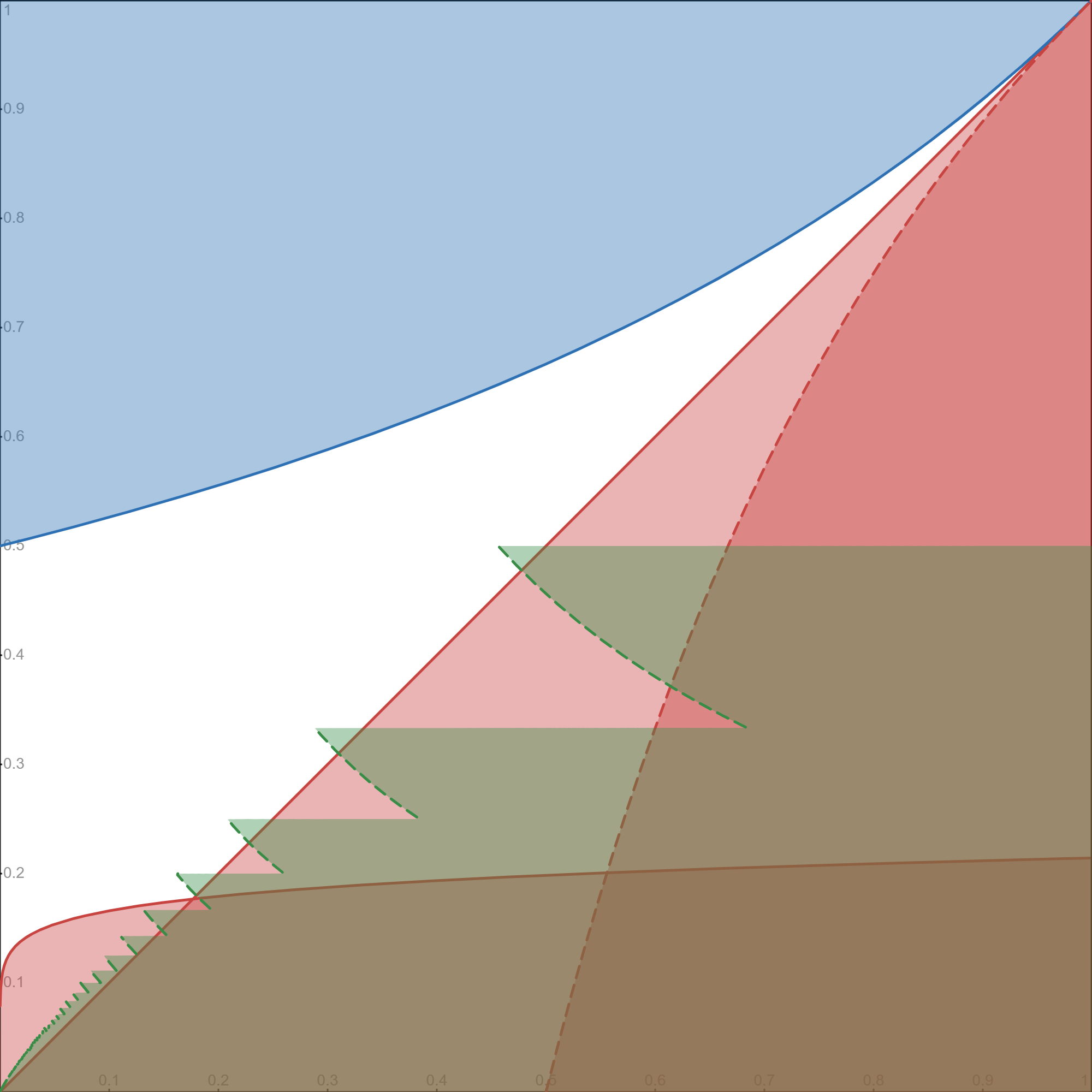}
     \caption{Bounds for $D(F^-_c)$ with $c = 0.9$}\label{Fig:Data5}
   \end{minipage}\hfill
   \begin{minipage}{0.45\textwidth}
     \centering
     \includegraphics[width=.7\linewidth]{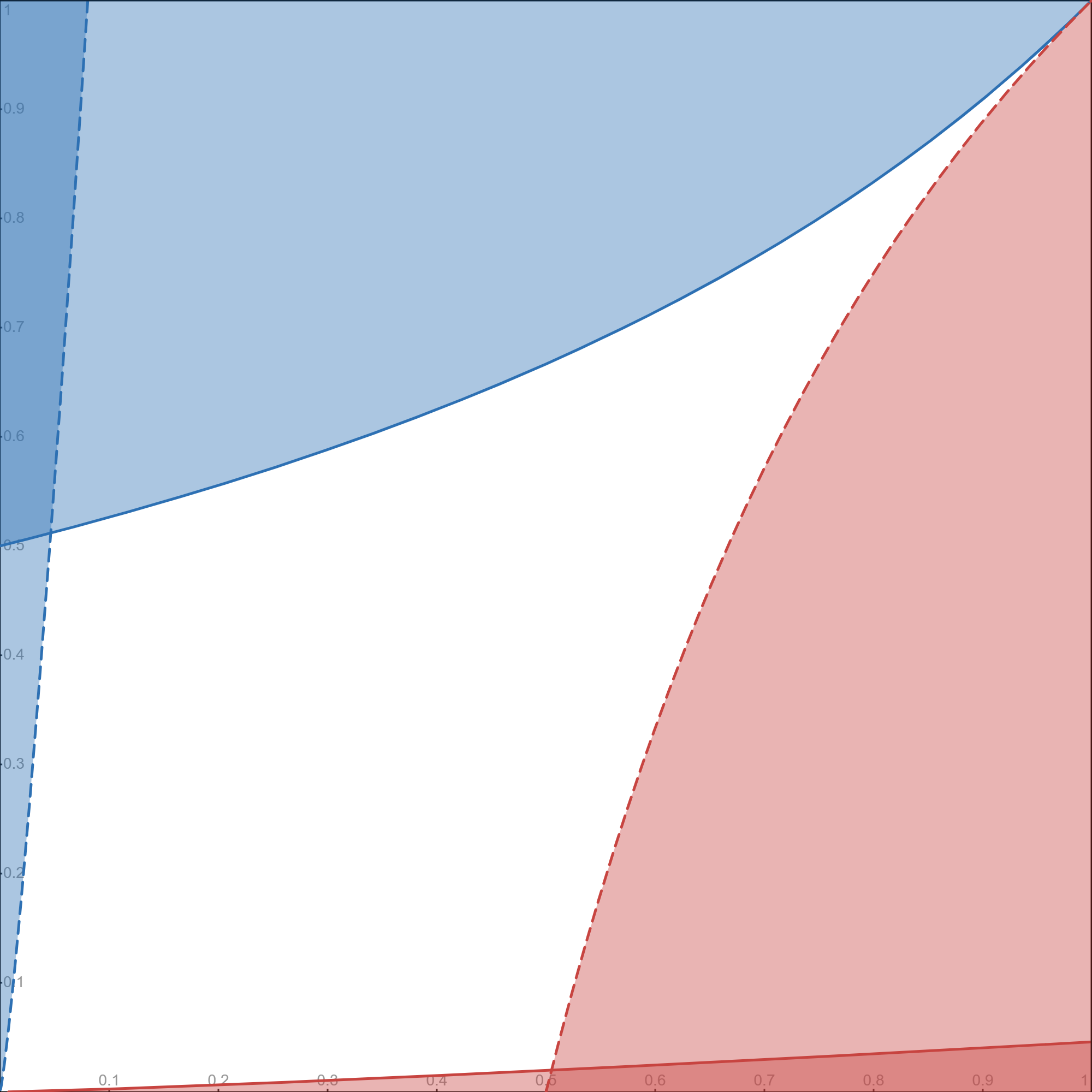}
     \caption{Bounds for $D(F^-_c)$ with $c = 0.45$}\label{Fig:Data6}
   \end{minipage}
\end{figure}



We will develop the necessary techniques and prove Theorem \ref{thmdf} in Section \ref{general}. Here, we would like to state a conjecture which our methods and results lead to.

\begin{conj}
    We believe that $D(F^{\pm}_{c}) = \{ (\alpha, \beta): \, \log(\alpha)/\log(\alpha\beta) > c \}$. In particular, we expect that $D(F^{\pm}_{\frac{1}{2}}) = \{ (\alpha, \beta): \, \alpha < \beta \}$.
\end{conj}

\vskip 0.3 cm

\subsection{d-Badly approximable numbers}\label{bbares}

The set of badly approximable numbers is an important object of study in Diophantine approximations. It is defined as follows:
$$
\ba:= \left\{x\in \R: \left|x - \frac{m}{n}\right| > \frac c{n^2}\text{ for some $c > 0$ and all }m\in\Z,\ n\in\N\right\}.
$$
It is known that the badly approximable numbers are dense and have Lebesgue measure zero. Schmidt showed in \cite{S} that they are winning for the largest possible bounds: namely, the set $\ba$ is winning for all $(\alpha, \beta) \in \hat D$. That is, $D(\ba)=\hat D$; in particular, the set $\ba$ is $\alpha$-winning for $\alpha < \frac{1}{2}$ and $\dim_H \ba = 1$. 

In a similar vein to the badly approximable numbers investigated by Schmidt, we define the set of \textit{d-Badly Approximable Numbers} as
$$
\BBa:= \left\{x\in \R: \left|x - \frac{m}{d^k}\right| > \frac c{d^k}\text{ for some $c > 0$ and all }m\in\Z,\ k\in\N\right\}
$$
and a closely related set
$$
\BBa(c):= \left\{x\in \R: \left|x - \frac{m}{d^k}\right| > \frac c{d^k}\text{ for all }m\in\Z,\ k\in\N\right\}.
$$
These definitions imply $\BBa = \bigcup_{c>0} \BBa(c)$.


It also follows from the work of Schmidt (\cite{S}) that the set $\Ba$ is winning for all $(\alpha, \beta) \in \hat D$; that is, $D(\Ba)=\hat D$.

While both $\Ba$ and $\Ba(c)$ did not produce any new Schmidt diagrams, there is a simple change we can make that will produce both nontrivial winning and nontrivial losing zones. For any integer $d \geq 2$, define
$$
\BBA(c, N):= \left\{x\in \R: \left|x - \frac{m}{d^k}\right| > \frac c{d^k}\text{ for all }m\in\Z,\ k \in \N \ \text{s.t.} \ k > N\right\}
$$
and
$$
\BBA(c) := \bigcup_{N \in \N} \BBA(c, N).
$$

We note that these sets has been previously studied in \cite{N} and \cite{P}. In particular, the Lebesgue measure of all the aforementioned sets is zero, and the precise Hausdorff dimension of $\Ba$ for any parameter $c$ is calculated in \cite{N}. Moreover, there exists such a critical value $c_0$ that the set $\Ba$ is countable if and only if $c > c_0$. This value is known to be related to Thue-Morse sequences; see Theorem 25 in \cite{N} for details.

Let us define $\gamma = 1-2\alpha+\alpha\beta$.

The following theorems provide the winning and losing conditions for $\BBA$ sets.

\begin{thm} \label{bbalos}
    Fix $0< c < \frac{1}{2}$. If 
    $$
    \frac{2c(1-\beta)(1-\alpha\beta)}{\beta\gamma} > 1,
    $$

    then for any $d$, the set $\BBA(c)$ is $(\alpha, \beta)$-losing.
\end{thm}

In the special case of the $\BA$ set, we have the following theorem.

\begin{thm} \label{2bawin}
    Fix $0< c < \frac{1}{6}$. The set $\BA(c)$ is $(\alpha, \beta)$-winning if $\alpha < \frac{1}{3}$ and $\beta > \frac{9c}{1-3\alpha}$.
\end{thm}

Just as with the sets $F_c^-$, we note that some losing parameters for the sets $\BA$ can be found using the Hausdorff dimension of $\BA$. It is not hard to see that for small enough $c$ these losing zones we can obtain this way are contained in the zones given by Theorem \ref{bbalos}.

A special case of winning and losing zones given by Theorem~\ref{bbalos} and Theorem~\ref{2bawin} can be seen at Figure~\ref{Fig:Data7}.

\begin{figure}[!htb]
   \begin{minipage}{0.45\textwidth}
     \centering
     \includegraphics[width=.7\linewidth]{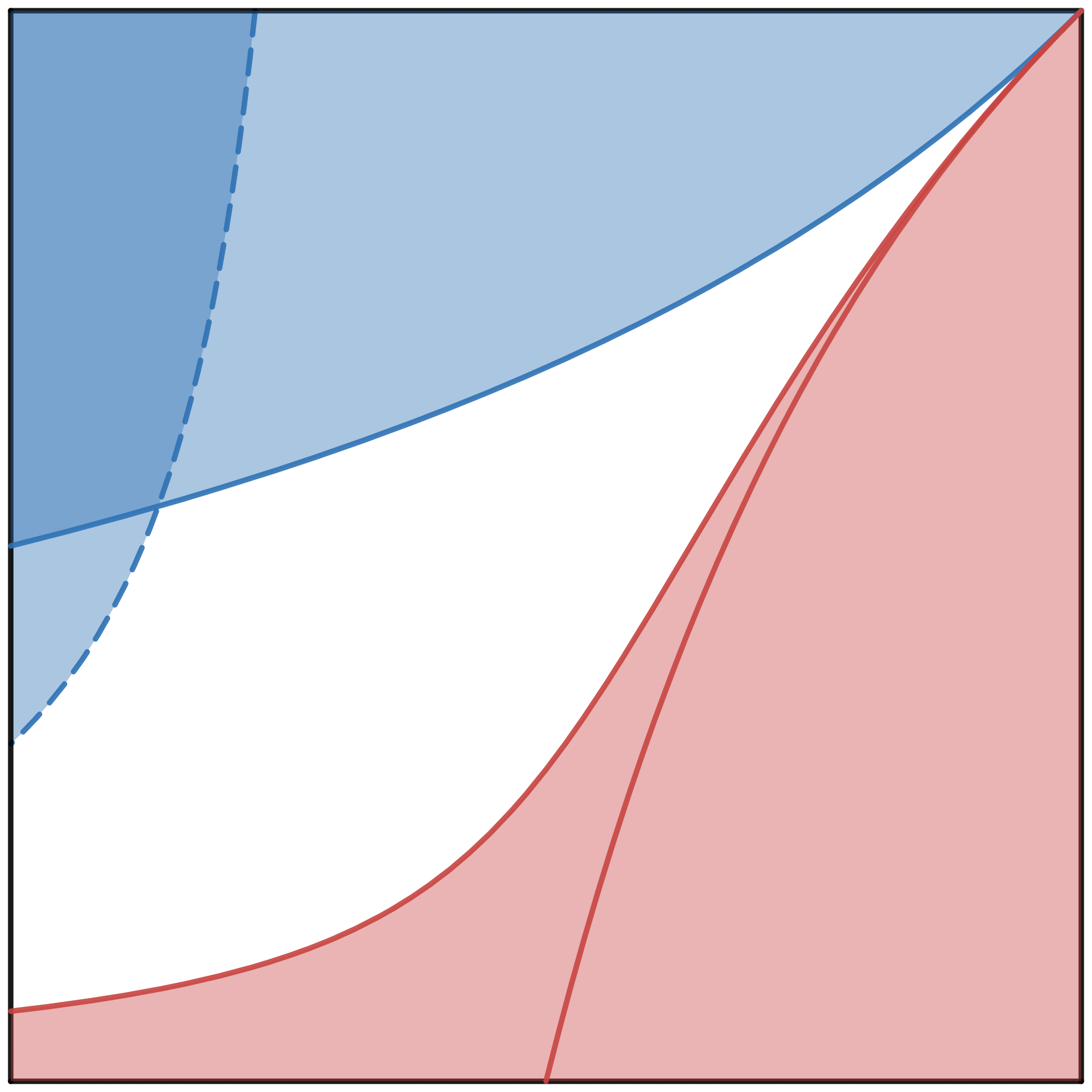}
     \caption{Bounds for $D(\BA(c))$ with $c = 0.035$}\label{Fig:Data7}
   \end{minipage}
\end{figure}

\section{General strategies in the Schmidt game} \label{general}

This section gives some general statements concerning strategies in Schmidt's game. These statements will be used to prove Theorem \ref{thmdf}; however, we believe they may also prove useful for proving results concerning Schmodt games on a larger variety of sets.

Let us start with a classical result due to Schmidt:

\begin{prop}\cite{S}\label{hb}
If a set $S$ is $(\alpha, \beta)$-winning, $\alpha\beta=\alpha'\beta'$, and $\alpha > \alpha'$, then $S$ is also $(\alpha', \beta')$-winning. 

\end{prop}

Sometimes for simplicity we will also refer to existence the of positional strategies, i.e. to the following classical statement:

\begin{prop}\cite{S}
 If a set $S$ is winning, there exists a \textit{positional} winning strategy; that is, a strategy that only depends on the most recent interval played and not the prior history of moves. 
\end{prop}

The next four statements are the technical steps in proving Lemma~\ref{lemmain}; however, they are rather general and can be used separately in the further research.

\begin{lem} \label{lem0}
    Fix $m \in \mathbb{N}$. Let $S \subseteq \mathbb{R}$ be some set with a property that $m^k  = m^k + S = S$ for any integer $k$.
    Suppose Bob has a strategy for $(\alpha, \beta)$-game with the set $S$ that allows him to win, and the first ball in this strategy is $B_0$. Then:

    \begin{enumerate}
        \item For any $k \in \mathbb{Z}$ Bob has a winning strategy for $(\alpha, \beta)$-game with the set $S$ starting with the ball $m^k B_0$;
        \item For any $k \in \mathbb{Z}$ Bob has a winning strategy for $(\alpha, \beta)$-game with the set $S$ starting with the ball $m^k + B_0$.
    \end{enumerate}

    (Here $m^k B_0 = \{m^k x: \,\,\, x \in B_0 \}$ and $m^k + B_0 = \{m^k + x: \,\,\, x \in B_0 \}$).
\end{lem}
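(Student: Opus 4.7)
The plan is to exploit the affine symmetries of $\mathbb{R}$ that preserve $D$. The two transformations $\phi_1 : x \mapsto m^k x$ (a positive dilation) and $\phi_2 : x \mapsto m^k + x$ (a translation) are affine bijections of $\mathbb{R}$: the first scales all lengths by the common factor $m^k$, and the second is an isometry. For any intervals $J' \subseteq J$ we therefore have $\phi(J') \subseteq \phi(J)$ and $|\phi(J')|/|\phi(J)| = |J'|/|J|$, so $\phi$ induces a bijection between legal plays of the $(\alpha,\beta)$-game starting at $B_0$ and legal plays starting at $\phi(B_0)$, preserving turn order as well as all the containment and ratio requirements.

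For part (1), I would construct Bob's strategy $\tilde\sigma$ in the new game (whose opening ball is $m^k B_0$) by conjugating his given winning strategy $\sigma$ through $\phi_1$. Concretely, whenever Alice plays some $A_i'$ in the new game, Bob pulls the entire history back via $\phi_1^{-1}$ to a legal history $(B_0, A_0, \ldots, B_i, A_i)$ of the original game (with $A_j = m^{-k} A_j'$ and $B_j = m^{-k} B_j'$), applies $\sigma$ to produce the next move $B_{i+1} \subseteq A_i$ satisfying $|B_{i+1}| = \beta |A_i|$, and outputs $B_{i+1}' := m^k B_{i+1}$. By the previous paragraph this is a legal reply in the new game.

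The intersections of the two plays are related by $x_\infty' = m^k x_\infty$, and the hypothesis $m^k D = D$ for every $k \in \mathbb{Z}$ (equivalently $m^{-k} D = D$) gives $x_\infty \in D \iff x_\infty' \in D$. Since $\sigma$ was winning for Bob, $x_\infty \notin D$, so $x_\infty' \notin D$ and $\tilde\sigma$ wins as well. Part (2) is proved identically using $\phi_2$ and the translation invariance $m^k + D = D$ in the last step. The statement is really a symmetry principle and I do not anticipate a serious obstacle; the only thing that needs to be verified carefully is that the conjugation by $\phi$ is compatible with arbitrary, possibly history-dependent, strategies of Bob, which is immediate because $\phi$ is a bijection on the tree of legal plays and commutes with taking the infinite intersection.
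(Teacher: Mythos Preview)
Your proof is correct and follows essentially the same approach as the paper: both conjugate Bob's original strategy by the map $x\mapsto m^k x$ (resp.\ $x\mapsto m^k+x$), pulling Alice's moves back, applying the given strategy, and pushing the response forward, then use the invariance of $D$ to conclude that the outcome stays outside $D$. Your phrasing in terms of affine symmetries acting on the tree of legal plays is slightly more abstract, but the content is identical.
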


\begin{proof}
    \begin{enumerate}
        \item Suppose Bob starts the game with $B_0' = m^k B_0$, and Alice plays some ball $A_0' \subset B_0'$. Let us define $A_0 = m^{-k} A_0' \subseteq B_0$. Now let $B_1 \subset A_0$ be the ball that Bob would play after $A_0$ according to his winning strategy. Let Bob play $B_1' = m^k B_1 \subset A_0'$.

        After $n$ steps, analogously, suppose Alice plays some ball $A_n' \subset B_n'$. Define $A_n = m^{-k} A_n'$, and define $B_{n+1}$ as the ball Bob would play after $A_n$; then let Bob play $B_{n+1}' = m^k B_{n+1} \subset A_n'$.

        It is easy to see that $\bigcap\limits_{i=0}^{\infty} B_i' = m^k \bigcap\limits_{i=0}^{\infty} B_i = m^k x$, where $x$ is the outcome of some game with the starting ball $B_0$, i.e. $x \notin S$, therefore $m^k x \notin S$ and Bob wins.

        \item The strategy and the argument is analogous; we just take $A_i = A_i' - m^k$ and $B_i' = B_i + m^k$ and use the fact that $x \in S$ if and only if $m^k + x \in S$.
    \end{enumerate}
\end{proof}

\begin{lem} \label{lem1}
    Let $S \subseteq \mathbb{R}$ be any set. Suppose Bob has a strategy for $(\alpha, \beta)$-game with the set $S$ that allows him to win, and the first ball in this strategy is $B_0$. Then for any $(\alpha', \beta')$ such that $\beta' < \beta$ and $\alpha' \beta' = \alpha \beta$ Bob has a winning strategy starting from $B_0$ in $(\alpha', \beta')$-game.
\end{lem}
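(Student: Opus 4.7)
The key observation is that the condition $\alpha'\beta' = \alpha\beta$ together with $\beta' < \beta$ forces $\alpha' > \alpha$. This means that in the $(\alpha',\beta')$-game Alice's intervals are larger than in the $(\alpha,\beta)$-game (for the same Bob-interval), while the size ratio for one full round, $\alpha\beta = \alpha'\beta'$, is identical. So Bob's intervals in the two games, if they start equal, would remain of equal size round after round: $|B_n| = |B_n'| = (\alpha\beta)^n |B_0|$.

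The plan is to simulate the $(\alpha,\beta)$-game inside the $(\alpha',\beta')$-game. I will maintain by induction on $n$ the invariant that Bob has played identical intervals $B_n = B_n'$ in both games, and that the simulated $(\alpha,\beta)$-game is consistent with Bob's assumed winning strategy. Given Alice's move $A_n' \subseteq B_n'$ with $|A_n'| = \alpha'|B_n'|$, Bob, acting as a fictitious "Alice" in the simulated $(\alpha,\beta)$-game, chooses any closed sub-interval $A_n \subseteq A_n'$ with $|A_n| = \alpha|B_n|$; such an $A_n$ exists because $\alpha|B_n| < \alpha'|B_n'| = |A_n'|$. He then applies his $(\alpha,\beta)$-winning strategy to obtain $B_{n+1} \subseteq A_n$ with $|B_{n+1}| = \alpha\beta|B_n|$, and plays $B_{n+1}' := B_{n+1}$ in the real $(\alpha',\beta')$-game. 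This is legal because $B_{n+1} \subseteq A_n \subseteq A_n'$ and $|B_{n+1}'| = \alpha\beta|B_n| = \alpha'\beta'|B_n'| = \beta'|A_n'|$, so the invariant $B_{n+1} = B_{n+1}'$ is restored.

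At the end, $x_\infty = \bigcap_n B_n' = \bigcap_n B_n$ is exactly the outcome of a legal play of the $(\alpha,\beta)$-game against Bob's winning strategy, hence $x_\infty \notin S$. Thus Bob has forced a win in the $(\alpha',\beta')$-game starting from $B_0$.

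The only non-trivial point is verifying the inclusion $A_n \subseteq A_n'$ and the matching sizes; the rest is bookkeeping. I expect that the sole care needed is the observation $\alpha' > \alpha$ (so that the gap $|A_n'| - |A_n| \ge 0$ permits Bob to select such an $A_n$), and that choosing $A_n$ inside $A_n'$ (instead of just inside $B_n$) is what propagates $B_{n+1} \subseteq A_n'$ and keeps the outer game legal.
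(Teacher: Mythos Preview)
Your argument is correct and is essentially the same as the paper's: both simulate the $(\alpha,\beta)$-game inside the $(\alpha',\beta')$-game by keeping $B_n'=B_n$ and shrinking Alice's move $A_n'$ to a sub-interval $A_n$ of length $\alpha|B_n|$ before applying Bob's original strategy. The only cosmetic difference is that the paper takes $A_n$ concentric with $A_n'$ while you allow an arbitrary sub-interval of the correct length; either choice works.
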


\begin{proof}
    Suppose Bob starts $(\alpha', \beta')$-game with the ball $B_0' = B_0$, and Alice plays some ball $A_0'$. Let $A_0$ be the ball of size $|A_0| = \alpha |B_0| = \frac{\alpha}{\alpha'}|A_0'| < |A_0'|$ centered at the center of $A_0'$; suppose $B_1$ is the ball Bob would play after $A_0$ in $(\alpha, \beta)$-game. Let Bob play the ball $B_1' = B_1$ after $A_0'$ in $(\alpha', \beta')$-game; indeed, $B_1' \subset A_0 \subseteq A_0'$ and $|B_1'| = \beta |A_0| = \alpha \beta |B_0'|$, so Bob can make such a move.

    After $n$ steps, analogously, suppose Alice plays some ball $A_n' \subset B_n'$. Define $A_n$ to be the ball of size $|A_n| = \alpha |B_n| = \frac{\alpha}{\alpha'}|A_n'| < |A_n'|$ centered at the center of $A_n'$, and define $B_{n+1}$ as the ball Bob would play after $A_n$; then let Bob play $B_{n+1}' = B_{n+1} \subset A_n'$. One has $\bigcap\limits_{i=0}^{\infty} B_i' = \bigcap\limits_{i=0}^{\infty} B_i = x$ where $x$ is the outcome of some game with the starting ball $B_0$, i.e. $x \notin S$, therefore Bob wins.

\end{proof}

\begin{lem} \label{lem2}
   Fix $m \in \mathbb{N}$. Let $S \subseteq \mathbb{R}$ be some set with a property that $m^k S = m^k + S = S$ for any integer $k$. Let parameters $(\alpha, \beta)$ be such that $\log_m (\alpha \beta) \notin \mathbb{Q}$, and suppose Bob wins $(\alpha, \beta)$-game with $S$. Then for any $L > 0$ and any $\varepsilon > 0$ there exists a winning strategy in this game for Bob starting with $B_0$ with 

    $$
    L \leq |B_0| < (1 + \varepsilon) L.
    $$
\end{lem}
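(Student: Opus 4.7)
The plan is to combine the scaling invariance from Lemma~\ref{lem0}(1), which upgrades any Bob-winning start to winning starts at all sizes $m^k$ times its own, with the sub-game observation that playing Bob's strategy for $n$ rounds produces a new winning start $B_n^*$ of size $(\alpha\beta)^n|B_0^*|$. Together these will produce Bob-winning starts at every size in
$$
\Sigma := \{m^k(\alpha\beta)^n|B_0^*| : k\in\mathbb{Z},\ n\in\mathbb{Z}_{\ge 0}\},
$$
where $B_0^*$ is the original winning starting ball, and the hypothesis $\log_m(\alpha\beta)\notin\mathbb{Q}$ will then force $\Sigma$ to be dense in $(0,\infty)$.

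In more detail, first play out one $(\alpha,\beta)$-game in which Bob follows his winning strategy and Alice chooses, say, the centered sub-interval at every step; after $n$ rounds this yields a specific ball $B_n^*$ of size $(\alpha\beta)^n|B_0^*|$. The restriction of Bob's original strategy to the sub-tree rooted at $B_n^*$ still defeats every continuation by Alice (a winning strategy defeats all Alice plays extending any fixed consistent prefix), so Bob wins the $(\alpha,\beta)$-game starting from $B_n^*$ as well. Applying Lemma~\ref{lem0}(1) to $B_n^*$ then gives Bob a winning start of size $m^k(\alpha\beta)^n|B_0^*|$ for every $k\in\mathbb{Z}$, producing all of $\Sigma$.

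For density, set $\gamma:=\log_m(\alpha\beta)$ and take $\log_m$ of the elements of $\Sigma$:
$$
\log_m\Sigma = \{k+n\gamma+\log_m|B_0^*| : k\in\mathbb{Z},\ n\ge 0\}.
$$
Since $\gamma\notin\mathbb{Q}$, the sequence $\{n\gamma\bmod 1\}_{n\ge 0}$ is dense in $[0,1)$ by Weyl/Kronecker, and adjoining the integer translates by $k$ makes $\log_m\Sigma$ dense in $\mathbb{R}$; hence $\Sigma$ is dense in $(0,\infty)$. Given $L>0$ and $\varepsilon>0$, pick $k,n$ with $m^k(\alpha\beta)^n|B_0^*|\in[L,(1+\varepsilon)L)$ and take $B_0 := m^k B_n^*$.

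There is no serious obstacle; the only step worth stating carefully is the sub-game observation that a winning strategy starting at $B_0^*$ induces a winning strategy starting at each reachable $B_n^*$, since the remaining ingredients are a direct invocation of Lemma~\ref{lem0} and the classical density of irrational rotations.
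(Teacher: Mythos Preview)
Your proposal is correct and follows essentially the same approach as the paper: both arguments combine the sub-game observation (any $B_n$ reached along Bob's winning play is itself a winning start, of size $(\alpha\beta)^n|B_0^*|$) with the scaling from Lemma~\ref{lem0}(1), and then use the irrationality of $\log_m(\alpha\beta)$ to hit the target interval $[L,(1+\varepsilon)L)$. The only cosmetic difference is that the paper invokes equidistribution of $k\xi$ mod~$1$ (with $\xi=-\log_{\alpha\beta}m$) to locate suitable $k,n$ directly, whereas you phrase it as density of $\{n\gamma\bmod 1\}$ plus integer translates; these are interchangeable here.
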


\begin{proof}
    Suppose the ball Bob starts his winning strategy with has size $l$. We denote $\xi = - \log_{\alpha \beta} m$ and $\chi = \log_{\alpha \beta} (\frac{L}{l})$, and let $\varepsilon' = \log_{\alpha \beta} (1 + \varepsilon)$. 

    As $\xi$ is irrational, the sequence $k \xi + \chi + \varepsilon'$ is uniformly distributed modulo 1, so there exists $k$ such that $0 < \{ k \xi + \chi + \varepsilon' \} \leq \varepsilon'$. Let $n = \lfloor k \xi + \chi + \varepsilon'  \rfloor $; in this case
    $$
    \log_{\alpha \beta} L + k \xi \leq n + \log_{\alpha \beta} l < \log_{\alpha \beta} L + k \xi +  \varepsilon'
    $$
    and
    $$
    \frac{L}{m^k} \leq (\alpha \beta)^n l = |B_n| < (1 + \varepsilon) \frac{L}{m^k},
    $$
    where $B_n$ is any ball Bob can play using his strategy. Fix some possible $B_n$; if it occurs in the game, Bob can start the game with this $B_n$. By Lemma~\ref{lem0} Bob can start also with $B_0 = m^k B_n$ and win. It remains to notice that $|B_0| = m^k |B_n|$ and
    $$
    L \leq |B_0| < (1 + \varepsilon) L.
    $$
\end{proof}

\begin{lem} \label{lem3}
    Let $S \subseteq \mathbb{R}$ be any set. Suppose Bob has a strategy for $(\alpha, \beta)$-game with the set $S$ that allows him to win, and the first ball in this strategy is $B_0$. Then for any $\varepsilon > 0$ and for any $B_0' \subseteq B_0$ with $|B_0'| \geq \frac{|B_0|}{1 + \varepsilon}$, Bob has a winning strategy starting from $B_0'$ in $((1 + \varepsilon) \alpha, \frac{\beta}{1 +\varepsilon})$-game.
\end{lem}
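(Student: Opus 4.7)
My plan is to let Bob in the new $\bigl((1+\varepsilon)\alpha,\beta/(1+\varepsilon)\bigr)$-game simulate his original $(\alpha,\beta)$-strategy in parallel, exploiting the fact that the product of parameters is preserved, $\alpha\beta=\alpha'\beta'$, just as in Lemma~\ref{lem1}. Concretely, I would maintain a ``shadow'' copy of the original game whose balls $B_n, A_n$ are slightly larger than, but contain, their counterparts $B_n', A_n'$ in the new game. The precise invariant will be $B_n'\subseteq B_n$ together with
\[
\frac{|B_n|}{1+\varepsilon}\le |B_n'|\le |B_n|,
\]
which holds at $n=0$ by the hypothesis on $B_0'$.

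Given this invariant, the induction step is arranged so that the factor $1+\varepsilon$ in the new parameters exactly compensates for the size loss between the real and shadow balls. When Alice plays $A_n'\subseteq B_n'$ with $|A_n'|=(1+\varepsilon)\alpha|B_n'|$, the invariant yields $|A_n'|\ge \alpha|B_n|$, so I can choose a sub-ball $A_n\subseteq A_n'$ with $|A_n|=\alpha|B_n|$; since $A_n\subseteq A_n'\subseteq B_n$, this is a legal Alice move in the original game. Feeding $A_n$ into Bob's original winning strategy produces $B_{n+1}\subseteq A_n$ of size $\alpha\beta|B_n|$. Because $B_{n+1}\subseteq A_n\subseteq A_n'$ and $|B_{n+1}|=\alpha\beta|B_n|\ge \alpha\beta|B_n'|$, I can pick $B_{n+1}'\subseteq B_{n+1}$ of size exactly $\alpha\beta|B_n'|=(\beta/(1+\varepsilon))|A_n'|$, which is precisely Bob's required legal move in the new game. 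The ratio $|B_{n+1}'|/|B_{n+1}|=|B_n'|/|B_n|$ is unchanged, so the invariant propagates.

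To conclude, since $B_n'\subseteq B_n$ for every $n$ and both sequences of nested intervals shrink to points, $\bigcap_n B_n'=\bigcap_n B_n=\{x_\infty\}$, and by hypothesis $x_\infty\notin S$; hence Bob wins the new game starting from $B_0'$. The whole argument is a mirroring procedure driven by the identity $\alpha\beta=\alpha'\beta'$; the only point requiring any care is verifying at each step that the ``sub-ball of prescribed size'' actually fits inside the designated container, but both inequalities follow immediately from the invariant, so this is bookkeeping rather than a genuine obstacle.
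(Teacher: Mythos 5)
Your proof is correct and follows essentially the same simulation idea as the paper's: Bob runs a ``shadow'' copy of the original $(\alpha,\beta)$-game whose balls contain the real ones, with the factor $1+\varepsilon$ in the new parameters exactly absorbing the size discrepancy. The only difference is cosmetic: the paper first treats the exact case $|B_0'|=|B_0|/(1+\varepsilon)$ (so that it can take $A_n=A_n'$ with no shrinking) and then handles general $B_0'$ by a final appeal to Lemma~\ref{lem1}, whereas you carry the two-sided invariant $|B_n|/(1+\varepsilon)\le|B_n'|\le|B_n|$ through a single induction.
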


\begin{proof}
    Fix $\varepsilon > 0$. Suppose Bob starts $(\alpha', \beta')$-game (where $\alpha' = (1 + \varepsilon) \alpha$ and $\beta' = \frac{\beta}{1 + \varepsilon}$) with some $B_0'$ such that $|B_0'| = \frac{|B_0|}{1 + \varepsilon}$.

    Suppose Alice chooses some $A_0' \subseteq B_0' \subseteq B_0$; one has $|A_0'| = \alpha' |B_0'| = (1 + \varepsilon) \alpha \frac{|B_0|}{1 + \varepsilon} = \alpha |B_0|$. Let $A_0 = A_0'$, and let $B_1$ be the ball Bob would play after $A_0$ in $(\alpha, \beta)$-game. Now, define $B_1'$ as a ball centered at the center of $B_1$ having $|B_1'| = \frac{|B_1|}{1 + \varepsilon}$, and let Bob play $B_1'$ after $A_0'$ in our $(\alpha', \beta')$-game. 

    Analogously, let $A_n = A_n'$, $B_{n+1}$ the ball Bob would play after $A_n$ in $(\alpha, \beta)$-game and $B_{n+1}'$ a ball centered at the center of $B_{n+1}$ having $|B_{n+1}'| = \frac{|B_{n+1}|}{1 + \varepsilon}$; Bob can play $B_{n+1}'$ after any possible $A_n'$.

    One can see that $\bigcap\limits_{i=0}^{\infty} B_i' = \bigcap\limits_{i=0}^{\infty} B_i = x$ where $x \notin S$, as it is the outcome of an $(\alpha, \beta)$-game where Bob followed his winning strategy. Therefore Bob wins.

    Finally, for an arbitrary $|B_0'| \geq \frac{|B_0|}{1 + \varepsilon}$ let us note that $|B_0'| = \frac{|B_0|}{1 + \varepsilon'}$ for $\varepsilon' \leq \varepsilon$, which means that Bob can win $((1 + \varepsilon')\alpha, \frac{\beta}{1 + \varepsilon'})$-game on $S$ starting from $B_0'$. Lemma~\ref{lem1} completes the proof. 
\end{proof}

\begin{lem} \label{lemmain}
    Fix $m \in \mathbb{N}$. Let $S \subseteq \mathbb{R}$ be some set with a property that $m^k S = m^k + S = S$ for any integer $k$.  Let parameters $(\alpha, \beta)$ be such that $\log_m (\alpha \beta) \notin \mathbb{Q}$, and suppose Bob wins $(\alpha, \beta)$-game with $S$. Then for any $\beta' < \beta, \, \alpha' = \frac{\alpha \beta}{\beta'}$, and any closed ball $T$ Bob can win $(\alpha', \beta')$-game starting with any ball $T$ as his initial ball.
\end{lem}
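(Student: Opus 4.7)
My plan is to follow the template hinted at by the preceding lemmas: use Lemma~\ref{lem2} to produce a ball of nearly the right size from which Bob wins the $(\alpha,\beta)$-game, use Lemma~\ref{lem0} to translate it so that it contains $T$, apply Lemma~\ref{lem3} to convert the winning property into one whose initial ball is $T$ (at the cost of slightly perturbing the parameters), and finish with Lemma~\ref{lem1}.

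More concretely, set $\varepsilon := \beta/\beta' - 1 > 0$, so that $\beta' = \beta/(1+\varepsilon)$ and $\alpha' = (1+\varepsilon)\alpha$, and fix an auxiliary $\varepsilon_1 \in (0,\varepsilon)$. Since $\log_m(\alpha\beta)\notin\mathbb{Q}$, Lemma~\ref{lem2} applied with $L$ slightly exceeding $|T|$ produces a ball $B$ from which Bob has a winning $(\alpha,\beta)$-strategy and whose length satisfies $|T| < |B| < (1+\varepsilon_1)|T|$. Now I would translate $B$ horizontally to engulf $T$: Lemma~\ref{lem0}(2) says the winning property is preserved under adding a single $m^k$, and iterating, it survives adding any element of the additive subgroup of $\mathbb{R}$ generated by $\{m^k : k\in\mathbb{Z}\}$, namely $\mathbb{Z}[1/m]$. (One must observe that the bijection between games used inside the proof of Lemma~\ref{lem0}(2) is two-sided, so subtractions of $m^k$ are also admissible.) Since $m\ge 2$, $\mathbb{Z}[1/m]$ is dense in $\mathbb{R}$, and $|B|>|T|$ lets me pick $v\in\mathbb{Z}[1/m]$ for which $T\subset B+v=:B'$, with Bob still winning the $(\alpha,\beta)$-game from $B'$.

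To finish, set $\eta := |B'|/|T| - 1 \in (0,\varepsilon_1) \subset (0,\varepsilon)$. Lemma~\ref{lem3}, applied with tolerance $\eta$ to the pair $T\subseteq B'$, yields a winning $((1+\eta)\alpha,\beta/(1+\eta))$-strategy for Bob that starts from $T$. Since $\eta<\varepsilon$ gives $\beta'=\beta/(1+\varepsilon)<\beta/(1+\eta)$, while $\alpha'\beta'=\alpha\beta=((1+\eta)\alpha)(\beta/(1+\eta))$ by construction, Lemma~\ref{lem1} promotes this to a winning $(\alpha',\beta')$-strategy still starting from $T$.

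I expect the main obstacle to be the translation step: the statement of Lemma~\ref{lem0}(2) offers only a single shift by $m^k$, whereas herding $B$ around a prescribed $T$ requires a dense set of shifts. Verifying that the same proof accommodates every $v$ in the subgroup $\mathbb{Z}[1/m]$, and invoking its density, is the crux of the argument; the remaining parameter bookkeeping (choosing $\varepsilon_1$, controlling $\eta$, checking the strict inequalities for Lemma~\ref{lem1}) is routine.
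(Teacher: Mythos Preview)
Your proposal is correct and follows essentially the same route as the paper: apply Lemma~\ref{lem2} to get a winning initial ball of size just above $|T|$, shift it by an element of $\mathbb{Z}[1/m]$ (iterating Lemma~\ref{lem0}(2) and using that the hypothesis $m^k+D=D$ is symmetric in sign) so that it contains $T$, and then invoke Lemma~\ref{lem3} to pass to the initial ball $T$ with the perturbed parameters. The only cosmetic difference is that the paper absorbs your final Lemma~\ref{lem1} step into the $\ge$ clause of Lemma~\ref{lem3}, applying the latter directly with tolerance $\varepsilon$ rather than with your intermediate $\eta$.
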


\begin{proof}
    Fix an arbitrary ball $T$. Also, fix $\varepsilon > 0$, and let $L = |T|$. By Lemma~\ref{lem2} Bob has a winning strategy in $(\alpha, \beta)$-game with a starting ball $B_0$ s.t. $(1 + \frac{\varepsilon}{2}) L \leq |B_0| < (1 + \varepsilon) L$. Take $\varepsilon' < \varepsilon$ such that $|B_0| = (1 + \varepsilon') L$.

    Now, let $k$ be such that $\frac{1}{m^k} < \frac{1}{2} \varepsilon' L = \frac{1}{2}(|B_0| - |T|)$; let $b_0$ be the center of $B_0$ and $t$ the center of $T$. There exists $q \in \mathbb{Z}$ such that $|\frac{q}{m^k} + b_0 - t| < \frac{1}{m^k}$. Consider the ball $B_0' = B_0 + \frac{q}{m^k}$: by Lemma~\ref{lem0}, Bob has a winning strategy in $(\alpha, \beta)$-game with a starting ball $B_0'$. In addition, note that $T \subseteq B_0'$ and $|T| = \frac{|B_0'|}{1 + \varepsilon'}$. By Lemma~\ref{lem3}, Bob has a winning strategy in $((1 + \varepsilon) \alpha, \frac{\beta}{1 + \varepsilon})$-game with a starting ball $T$.

    Now, for $(\alpha', \beta')$ as in Lemma it remains to take $\varepsilon$ under the condition $(1 + \varepsilon) \alpha = \alpha'$.     
\end{proof}

Lemma \ref{lemmain} makes the game "symmetric": now we can assume that both Alice and Bob can not choose their starting balls. This symmetry makes it possible to utilize symmetric strategies:

\begin{lem} \label{symm}
    Let $m \in \mathbb{N}$ and $S, Q \subseteq \mathbb{R}$ be some sets with the following properties:
    \begin{itemize}
        \item $m^k S = m^k + S = S$ for any integer $k$.       
        \item $S \sqcup (- S) \sqcup Q = \mathbb{R}$.
    \end{itemize}

    Then:

    \begin{enumerate}
        \item If $\alpha \geq \beta$, then $(\alpha, \beta)$ is a non-winning point for $S$;
        \item If $\beta > \alpha$ and $\log_m (\alpha \beta) \notin \mathbb{Q}$, then $(\alpha, \beta)$ is a non-losing point for $S \sqcup Q$.
    \end{enumerate}
\end{lem}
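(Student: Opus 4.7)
The plan is to prove the two parts in sequence, with (2) reducing to (1) by a role-swap argument combined with Lemma \ref{lemmain}.

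Throughout I would use the following preliminary observation: because $m^k + D = D$ for every integer $k$, the set $D$ is $\mathbb{Z}[1/m]$-translation invariant, so for any $c$ with $2c \in \mathbb{Z}[1/m]$ the reflection $x \mapsto 2c - x$ sends $D$ onto $-D + 2c = -D$. Combined with $D \cap (-D) = \emptyset$, this forces every such $c$ to lie in $Q$; in particular $\mathbb{Z}[1/m]/2 \subseteq Q$.

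For part (1), my plan is to construct a reflection-based winning strategy for Bob. He begins with $B_0$ centered on an $m$-adic half-rational, and after each of Alice's moves $A_i \subseteq B_i$ of length $\alpha|B_i|$ he uses his positional slack $(1-\beta)\alpha|B_i|$ inside $A_i$ to snap the center of $B_{i+1}$ onto an ever-finer $m$-adic half-rational grid. The inequality $\alpha \geq \beta$ is precisely what makes this slack large enough, compared with the shrinking factor $\alpha\beta$, for the snap-to-grid procedure to continue indefinitely; pairing this with the reflection symmetry about the grid centers (which interchanges $D$ and $-D$ and so pushes the usable portion of $B_{i+1}$ into $-D \sqcup Q$), Bob forces $x_\infty$ into $Q$, so $x_\infty \notin D$. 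The main obstacle in this part is the careful bookkeeping of the grid depths and the verification that the limit $x_\infty$ genuinely ends up in $Q$, which is where $\alpha \geq \beta$ is indispensable.

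For part (2), my plan is to reduce cleanly to part (1). Viewed from Alice's first move onward, the sequence $A_0, B_1, A_1, B_2, \ldots$ is itself a Schmidt game in which old Alice takes Bob's role and old Bob takes Alice's, with swapped parameters $(\beta, \alpha)$, since $|B_1| = \beta|A_0|$ and $|A_1| = \alpha|B_1|$. Under the hypothesis $\beta > \alpha$, the shifted game lies in the regime $\alpha' \geq \beta'$ of part (1); applying part (1) with $-D$ in place of $D$ (noting that $-D$ inherits the $m$-invariance and that the decomposition $(-D) \sqcup D \sqcup Q = \mathbb{R}$ has the same form since $-Q = Q$) gives the new ``Bob'' a winning strategy on $-D$ at the shifted diagonal $(\sqrt{\alpha\beta},\sqrt{\alpha\beta})$. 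Lemma \ref{lemmain}, for which the irrationality hypothesis $\log_m(\alpha\beta) \notin \mathbb{Q}$ is exactly what is required, then transports this winning strategy along the hyperbola $\alpha'\beta' = \alpha\beta$ to the shifted parameters $(\beta, \alpha)$ themselves, starting from any ball. Real Alice then picks any $A_0 \subseteq B_0$ and follows this transported strategy from $A_0$ onward; the resulting $x_\infty$ lies outside $-D$, hence in $D \sqcup Q$, giving her the win.

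The main obstacle throughout is the direct construction in part (1); once that is established, the role-swap of part (2) is essentially automatic, with the irrationality assumption entering precisely where Lemma \ref{lemmain} requires it.
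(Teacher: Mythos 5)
Your preliminary observation that $\tfrac12\mathbb{Z}[1/m]\subseteq Q$ is correct, but the strategy you build on it in part (1) cannot work, and this is a genuine gap rather than a fixable detail. You propose a $D$-independent strategy by which Bob snaps his centers to an $m$-adic half-rational grid and thereby ``forces $x_\infty$ into $Q$.'' Take the paper's own instance of the lemma: $D=\mathcal{S}$, $Q=\mathbb{Q}$, and $(\alpha,\beta)=(0.9,0.9)$, which satisfies $\alpha\geq\beta$ and lies in $\hat D$. By Lemma~\ref{trivial}(b) the co-countable set $\mathbb{R}\setminus\mathbb{Q}$ is $(0.9,0.9)$-winning, so Alice can force $x_\infty\notin\mathbb{Q}$; hence no strategy of Bob forces $x_\infty\in Q$ there. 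More fundamentally, the reflection symmetry only says that $D$ and $-D$ occupy mirror positions inside each ball centered on the grid; it does not push any subinterval into $-D\sqcup Q$, and since $D$ is an arbitrary (in the application, choice-function) ``half'' of the irrationals, no strategy that ignores Alice's play can avoid it. The paper's proof of (1) is necessarily indirect: it \emph{assumes} Alice has a positional winning strategy $A(\cdot)$ for the $(\alpha,\beta)$-game on $D$, has Bob mirror it via $B_{n+1}=-A(-A_n)$ to win the $(\beta,\alpha)$-game (the outcome lands in $-D$, which is disjoint from $D$), and then contradicts the monotonicity Lemma~\ref{hb}. Note this yields only ``not winning,'' not an explicit strategy for Bob, and it uses nothing but the disjointness $D\cap(-D)=\emptyset$.

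This defect propagates into your part (2). The role-swap plus Lemma~\ref{lemmain} is indeed the right shape (and close to the paper's), but both the role-swap and Lemma~\ref{lemmain} require an \emph{actual} winning strategy for the new Bob on $-D$, which you source from the strong reading of part (1). Part (1) does not supply one; in fact the natural route to a genuine Bob-strategy in the regime $\alpha>\beta$ goes \emph{through} part (2) (mirror Alice's winning strategy for the $(\beta,\alpha)$-game on $D$), so your reduction is circular on top of resting on the failed construction. The paper avoids this by running part (2) as a contradiction as well: assume Bob wins the $(\alpha,\beta)$-game on $D\sqcup Q$, use Lemma~\ref{lemmain} (this is where $\log_m(\alpha\beta)\notin\mathbb{Q}$ enters, together with an $\varepsilon$ chosen so that $(1+\varepsilon)^2<\beta/\alpha$) to relocate his strategy to an arbitrary starting ball at the perturbed parameters, reflect it into a winning strategy for Alice at $\bigl(\tfrac{\beta}{1+\varepsilon},\alpha(1+\varepsilon)\bigr)$, and contradict Lemma~\ref{hb}. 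You would need to restructure both halves along these lines.
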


\begin{proof}

\begin{enumerate}
    \item Suppose $\alpha \geq \beta$ and $S$ is $(\alpha, \beta)$-winning. In this case Alice has a positional winning strategy described by the function $A(I)$ (where $I \subset \mathbb{R}$ is a closed interval) for this game. Consider $(\beta, \alpha)$-game on the set $S$ and let Bob play the following strategy:

    \begin{itemize}
        \item Bob chooses $B_0 = A([0, 1])$. 
        \item If Alice plays the interval $A_n$ at the $n$-th step, Bob plays $B_{n+1} = - A( - A_n )$.
    \end{itemize}

    Suppose $x = \bigcap\limits_{i=0}^{\infty} B_i$ is the outcome of the game. Then $-x = \bigcap\limits_{i=0}^{\infty} A( - A_n )$ is the outcome of the $(\alpha, \beta)$-game where Bob plays the balls $[-1; 0]; -A_0, -A_1, \ldots $ and Alice plays using her winning strategy $A(I)$. Therefore, $-x \in S$, which means $-x \notin S$ and Bob wins.

    It means that the point $(\alpha, \beta)$ is winning and the point $(\beta, \alpha)$ is losing for the set $S$, which contradicts Proposition~\ref{hb}. Thus, the point $(\alpha, \beta)$ is non-winning.

    \item The idea of the proof is the same as in (a). Suppose $S \sqcup Q$ is $(\alpha, \beta)$-losing; Take $\varepsilon > 0$ under the condition $(1 + \varepsilon)^2 < \frac{\beta}{\alpha}$. Then by Lemma~\ref{lemmain} for any closed ball $T$ Bob has a winning strategy for $\left(\alpha (1 + \varepsilon), \frac{\beta}{(1 + \varepsilon)}\right)$-game starting from $T$. We use this statement to construct a $\left(\frac{\beta}{(1 + \varepsilon)}, \alpha (1 + \varepsilon)\right)$-winning strategy for Alice.

    Suppose Bob starts some $(\frac{\beta}{(1 + \varepsilon)}, \alpha (1 + \varepsilon))$-game with the ball $B_0$. Then Alice picks some ball $A_0 \subseteq B_0$ without any restrictions. If Bob plays the ball $B_n$ on $n$-th step, Alice responds by playing $A_n$ where $-A_n$ is the ball Bob would play in  $(\alpha (1 + \varepsilon), \frac{\beta}{(1 + \varepsilon)})$-game which started from $-A_0$ after the balls $-A_0, -B_1, \ldots, -B_n$ (in this game $-A_i$ are Bob's balls and $-B_i$ are Alice's). Let $x$ be the outcome of this $(\frac{\beta}{(1 + \varepsilon)}, \alpha (1 + \varepsilon))$-game; then $-x$ is the outcome of $(\alpha (1 + \varepsilon), \frac{\beta}{(1 + \varepsilon)})$-game Bob played with his winning strategy, i.e. $-x \in (-S)$ and $x \in S \subseteq S \sqcup Q$. It shows that $(\frac{\beta}{(1 + \varepsilon)}, \alpha (1 + \varepsilon))$ is a winning point for $ S \sqcup Q$.

    But one has $\alpha < \alpha (1 + \varepsilon)  < \frac{\beta}{(1 + \varepsilon)}$, which by Proposition~\ref{hb} contradicts our initial assumption. Therefore $S \sqcup Q$ is $(\alpha, \beta)$-non-losing.    
\end{enumerate}

\end{proof}

\begin{remark}
    In general, it's not guaranteed that the game on $S$ as in Lemma \ref{symm} is determined, so we can only prove the weaker statements about $(\alpha, \beta)$ being non-winning or non-losing. Indeed: by ergodicity of the multiplication by $m$ map, one can see that such a $S$ has to have measure zero or be non-measurable. In the latter case, such a $S$ is not compatible with the Axiom of determinacy. It is not clear if there exists such a non-measurable $S$ on which the game is determined; we only were able to construct the examples of such $S$ using the axiom of choice.
\end{remark}

We finish this section with an almost obvious technical statement that we need to prove Proposition \ref{freq12}.

\begin{lem} \label{shiftgen}
    Suppose $E$ is an $(\alpha, \beta)$-winning set and for some $\xi \in \mathbb{R}$ one has $E + \xi \subseteq S$. Then the set $S$ is $(\alpha, \beta)$-winning.
\end{lem}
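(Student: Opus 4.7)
The plan is to transfer Alice's winning strategy for the $(\alpha,\beta)$-game on $S$ over to the game on $D$ by using the translation by $\xi$. Concretely, I set up a correspondence between a given run of the game on $D$ and a "shadow" run of the game on $S$, obtained by subtracting $\xi$ from every interval. The key observation is that translation preserves interval lengths and inclusion, so the move ratios $\alpha$ and $\beta$ are preserved.

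Here is how the argument should go. Suppose Bob plays $B_0$ in the $(\alpha,\beta)$-game on $D$. I define a parallel game on $S$ in which Bob's opening interval is $B_0 - \xi$. Let $\sigma$ denote Alice's winning positional strategy for $S$, and let Alice respond in the game on $D$ by $A_0 := \sigma(B_0 - \xi) + \xi$. Since $\sigma(B_0-\xi) \subseteq B_0 - \xi$ with $|\sigma(B_0-\xi)| = \alpha|B_0-\xi| = \alpha|B_0|$, translating by $\xi$ gives $A_0 \subseteq B_0$ with $|A_0| = \alpha|B_0|$, a legal Alice move. Inductively, whenever Bob plays $B_{n+1} \subseteq A_n$, I regard $B_{n+1} - \xi \subseteq A_n - \xi = \sigma(B_n - \xi)$ as Bob's move in the shadow game on $S$, and have Alice reply with $A_{n+1} := \sigma(B_{n+1}-\xi) + \xi$, which again is a legal move by the same length computation.

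Finally, I check the outcome. By construction, the shadow game played by Alice using $\sigma$ produces the point $y_\infty = \bigcap_n (A_n - \xi)$, which lies in $S$ because $\sigma$ is winning. Translating back, the outcome in the actual game is $x_\infty = \bigcap_n A_n = y_\infty + \xi \in S + \xi \subseteq D$, so Alice wins on $D$.

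I do not expect any real obstacle: the entire content is that translation is an isometry of $\mathbb{R}$ that preserves the combinatorial structure of the game, and the hypothesis $S + \xi \subseteq D$ converts a win for $S$ directly into a win for $D$. The only point that needs a line of attention is checking that Alice's response remains inside Bob's most recent interval after translation, which is immediate from $(\sigma(B_n-\xi)+\xi) \subseteq ((B_n-\xi)+\xi) = B_n$.
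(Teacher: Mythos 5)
Your proposal is correct and takes essentially the same approach as the paper: the paper's entire proof is the one-line observation that if $A(I)$ is Alice's winning positional strategy on $S$, then $A(I-\xi)+\xi$ is a winning positional strategy on $D$, which is precisely the translated strategy you construct and verify in detail.
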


\begin{proof}
    If $A(I)$ was Alice's winning positional strategy on the set $E$, then $A(I - \xi) + \xi$ is a winning positional strategy on the set $S$. 
\end{proof}

\section{Proof of Theorem \ref{thmdf}}\label{freqproofs}

 We start with the following classical statement about uniform distribution:
\begin{lem}\cite{B}\label{equidist} If $\alpha$ is irrational, then the sequence $\alpha, 2\alpha, 3\alpha, \ldots$ is uniformly distributed modulo $1$.
\end{lem}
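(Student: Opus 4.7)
The plan is to prove this via Weyl's equidistribution criterion: a sequence $(x_n)_{n \geq 1}$ in $\mathbb{R}/\mathbb{Z}$ is uniformly distributed modulo $1$ if and only if, for every nonzero integer $h$,
$$
\lim_{N \to \infty} \frac{1}{N} \sum_{n=1}^{N} e^{2\pi i h x_n} = 0.
$$
Granting this criterion for the moment, the application is a one-line geometric sum estimate. With $x_n = n\alpha$ and $h \neq 0$,
$$
\sum_{n=1}^{N} e^{2\pi i h n \alpha} \;=\; e^{2\pi i h \alpha} \cdot \frac{1 - e^{2\pi i h N \alpha}}{1 - e^{2\pi i h \alpha}}.
$$
Since $\alpha$ is irrational, $h\alpha \notin \mathbb{Z}$, so the denominator is nonzero and the numerator has absolute value at most $2$. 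Dividing by $N$ and letting $N \to \infty$ gives $0$, which verifies Weyl's criterion and hence the lemma.

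To justify the criterion itself, I would first dispatch the ``only if'' direction: if $(x_n)$ is equidistributed, then by definition Riemann-integrable functions satisfy $\frac{1}{N}\sum_{n=1}^{N} f(x_n) \to \int_0^1 f$, and $x \mapsto e^{2\pi i h x}$ integrates to $0$ whenever $h \neq 0$. For the ``if'' direction, trigonometric polynomials are uniformly dense in continuous $1$-periodic functions by Stone--Weierstrass, so the hypothesis upgrades from exponentials to arbitrary continuous $f$. The indicator function of any subinterval $[a,b] \subset [0,1)$ can then be sandwiched between continuous functions agreeing with it off an arbitrarily small neighborhood of the endpoints; taking such approximations from above and below recovers $\frac{1}{N}\#\{n \leq N : x_n \in [a,b]\} \to b - a$.

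The main conceptual obstacle, were one proving this from scratch, is the ``if'' direction of the criterion: promoting control over trigonometric averages to control over indicator-function averages. The arithmetic heart of the lemma, the cancellation in the exponential sum, is by contrast very short and relies only on the fact that $h\alpha$ avoids the integers. Since the statement is classical and cited to \cite{B}, the practical route here is simply to record Weyl's criterion and the bounded geometric series computation above.
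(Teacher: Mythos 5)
Your proof is correct. The paper itself offers no proof of this lemma --- it is stated as a classical fact and cited to Bohl's 1909 paper --- so there is nothing to compare against line by line. Your route is the standard one: Weyl's criterion reduces the claim to the boundedness of the geometric sum $\sum_{n=1}^{N} e^{2\pi i h n\alpha}$, whose denominator $1-e^{2\pi i h\alpha}$ is nonzero precisely because $\alpha$ is irrational and $h\neq 0$, and the sketch of the criterion itself (exponentials $\to$ trigonometric polynomials via Stone--Weierstrass $\to$ continuous functions $\to$ indicators by sandwiching) is the correct and complete outline. This is a perfectly adequate self-contained justification of the cited result.
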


We will prove two separate statements, which together imply the result of Theorem \ref{thmdf}: Proposition \ref{dfgeneral} and Proposition \ref{freq12}.

\begin{prop}\label{dfgeneral}
Let $c \in (0, 1)$, and suppose  $\log_2{\alpha\beta} \notin \mathbb{Q}$. Then, the sets $F^{\pm}_c$ are $(\alpha, \beta)$-winning if $\log(4\alpha)/\log(\alpha\beta) > c$, and $(\alpha, \beta)$-losing if $\log(\alpha/4)/\log(\alpha\beta) \leq c$.
\end{prop}

\begin{proof}
    First, let us notice that our winning conditions imply that $\alpha < \frac{1}{4}$, and the losing conditions imply that $\beta \leq \frac{1}{4}$. We will show the winning strategy under these conditions for Alice in the $(\alpha, \beta)$-game on the set $F^-_c$ (and thus on $F^+_c$, since $F^-_c \subseteq F^+_c$).
    
    Let $r_0$ be the radius of $B_0: \,\, r_0 = \frac{1}{2} |B_0|$, and suppose Bob played some interval $B_t$. Define $k_t = -\floor{\log_2{(\alpha\beta)^t r_0}}$. Consider the smallest $m \in \mathbb{Z}$ such that $m / 2^{k_t} \in B_t$. For such an $m$, one has $\left[\frac{m}{2^{k_t}}, \frac{m+1}{2^{k_t}}\right] \subset B_t$. Then, Alice plays $A_t = \left[\frac{m}{2^{k_t}}, \frac{m}{2^{k_t}} + 2\alpha(\alpha\beta)^tr_0\right]$. Define $l_t$ such that $k_t+l_t = -\floor{\log_2{\alpha(\alpha\beta)^tr_0}}-2$. Then, Alice ensured by her move that for any $x \in A_t$, the $(k_t+1)$-th to the $(k_t+l_t)$-th digits are equal to 0.  In other words, she guarantees $l_t$ zeroes on this turn. For convenience, let $a = \log_2 \alpha$, $b = \log_2 \beta$, and $\rho = \log_2 r_0$; note that $a+b \notin  \mathbb{Q}$, and $a + 2 < 0$. We have
\begin{align*}
l_t &= (k_t+l_t)-k_t \\
&= \floor{(a+b)t+\rho} - \floor{a+(a+b)t+\rho} - 2 \\
& = (a+b)t+\rho - (a+(a+b)t+\rho) - \{(a+b)t+\rho\} + \{a+(a+b)t+\rho\} - 2\\
&= -(a+2) + \{a+(a+b)t+\rho\} - \{(a+b)t+\rho\}. 
\end{align*}

This implies that by the Lemma \ref{equidist},
\begin{align*}
\lim\limits_{n \rightarrow \infty} \frac{\sum_{i=0}^{n-1} l_i}{n}  &= -(a+2).
\end{align*}

Now, let $x = x_0 + \sum\limits_{i=1}^{\infty} x_i 2^{-i} = \bigcap\limits_{t = 0}^{\infty} A_t$ be the outcome of the game. We notice that for $k_n \leq k < k_{n+1}$ one has 

$$
\frac{\# \{ 1\leq i \leq k : x_i = 0 \}}{k} = \frac{\# \{ 1\leq i \leq k_n : x_i = 0 \}}{k_n} \cdot \frac{k_n}{k} + \frac{\# \{ k_n + 1 \leq i \leq k : x_i = 0 \}}{k},
$$
which implies
$$
\frac{\# \{ 1\leq i \leq k_n : x_i = 0 \}}{k_n} \cdot \frac{k_n}{k_{n+1}} \leq \frac{\# \{ 1\leq i \leq k : x_i = 0 \}}{k}.
$$

It is easy to see that $k_{n+1} - k_n$ is bounded, therefore $\lim\limits_{n \rightarrow \infty} \frac{k_n}{k_{n+1}} = 1$, and 

\begin{align*}
d^-(x, 0) &= \liminf\limits_{k \rightarrow \infty} \frac{\# \{ 1\leq i \leq k : x_i = 0 \}}{k} \\
&\geq \liminf\limits_{n \rightarrow \infty} \frac{\# \{ 1\leq i \leq k_n : x_i = 0 \}}{k_n} \cdot \frac{k_n}{k_{n+1}} \\
&\geq \lim\limits_{n \rightarrow \infty} \frac{\sum_{i=0}^{n-1} (l_i)}{1-\log_2((\alpha\beta)^nr_0)} \cdot \frac{k_n}{k_{n+1}} \\
&= \frac{-(a+2)}{-(a+b)}\\
&= \frac{\log(4\alpha)}{\log(\alpha\beta)} > c,
\end{align*}
which means that $x \in F^-_c$. Therefore, Alice wins. The losing condition is analogous, so we omit it.

\end{proof}

For the next statement, we fix $c = \frac{1}{2}$. Together with the proposition above, it clearly implies the statement of Theorem \ref{thmdf} due to trivial inclusions: $F_{c_1}^{\pm} \subseteq F_{c_2}^{\pm}$ if $c_1 \geq c_2$.

\begin{prop} \label{freq12}
    \begin{enumerate}
        \item\label{thm_fr_1} If $\alpha \geq \beta$, then the set $F^-_{\frac{1}{2}} = \{ x: \, d^-(x, 0) > \frac{1}{2} \}$ is $(\alpha, \beta)$-losing;
        \item\label{thm_fr_2} If $\beta > \alpha$ and $\log_2 (\alpha \beta) \notin \mathbb{Q}$, then the set $ \{ x: \, d^+(x, 0) \geq \frac{1}{2} \}$ is $(\alpha, \beta)$-winning.
    \end{enumerate}
\end{prop}

In order to show Proposition \ref{freq12}, we will need the following observation:

\begin{lem} \label{shift}
    Let $\mathfrak{S} \subseteq \mathbb{N}$ be an infinite set of density zero, and define $\xi: = \sum\limits_{j \in \mathfrak{S}} \frac{1}{2^j}$. Suppose $d^{\pm}(x, 0) = c$; then 
    $$
    d^{\pm}\left(x + \xi, 0\right) \geq c.
    $$
\end{lem}

\begin{proof} For $x \in \mathbb{Z}[\frac{1}{2}]$ one has $d^{\pm}t(x + \sum\limits_{j \in \mathfrak{S}} \frac{1}{2^j}, 0) = d^{\pm}(\sum\limits_{j \in \mathfrak{S}} \frac{1}{2^j}, 0) = 1$, as addition of $x$ changes only finitely many digits. Now assume $x \notin \mathbb{Z}[\frac{1}{2}]$.

    We define $Q \subseteq \mathbb{N}$ by the condition $x = \sum\limits_{j \in Q} \frac{1}{2^j}$, and let $\mathfrak{S} = \mathfrak{S}_1 \sqcup \mathfrak{S}_2$ where $\mathfrak{S}_1 = \mathfrak{S} \cap Q$. First, let us show that $d^{\pm} (x + \sum\limits_{j \in \mathfrak{S}_2} \frac{1}{2^j}, 0) = c$. One can see that $x + \sum\limits_{j \in \mathfrak{S}_2} \frac{1}{2^j} = \sum\limits_{j \in Q \sqcup \mathfrak{S}_2} \frac{1}{2^j}$, and (using the notation $\lim^+$ for $\limsup$ and $\lim^-$ for $\liminf$ locally for convenience)

    $$
    d^{\pm}(x + \sum\limits_{j \in \mathfrak{S}_2} \frac{1}{2^j}, 0) = {\lim\limits_{k \rightarrow \infty}}^{\pm}\frac{\# \{ 1 \leq i \leq k: \, i \notin  Q \sqcup \mathfrak{S}_2 \} }{k} = 
    $$
    $$
    {\lim\limits_{k \rightarrow \infty}}^{\pm} \Big( \frac{\# \{ 1 \leq i \leq k: \, i \notin  Q \} }{k} - \frac{\# \{ 1 \leq i \leq k: \, i \in  \mathfrak{S}_2 \} }{k} \Big) = 
    $$
    $$
    {\lim\limits_{k \rightarrow \infty}}^{\pm}\frac{\# \{ 1 \leq i \leq k: \, i \notin  Q \} }{k} - \lim\limits_{k \rightarrow \infty} \frac{\# \{ 1 \leq i \leq k: \, i \in  \mathfrak{S}_2 \} }{k}  = 
    $$
    $$
    = {\lim\limits_{k \rightarrow \infty}}^{\pm}\frac{\# \{ 1 \leq i \leq k: \, i \notin  Q \} }{k} - 0 =  d^{\pm}(x, 0) = c.
    $$

    Now it remains to prove the statement only for $\mathfrak{S}$ such that $\mathfrak{S} \subseteq Q$. Let us define $R \subseteq \mathbb{N}$ by $\sum\limits_{j \in Q} \frac{1}{2^j} + \sum\limits_{j \in \mathfrak{S}} \frac{1}{2^j} = \sum\limits_{j \in R} \frac{1}{2^j}$. It is easy to check that:

    \begin{itemize}
        \item Suppose $i \notin Q$ and $i+1 \notin Q$. Then $i \notin R$;
        \item Suppose $a \notin Q, \, a+b \notin Q$ and $i \in Q$ for all $a < i < a+b$. Then $\#\{ a \leq i < a+b: \, i \in R \} \leq b-1 = \#\{ a \leq i < a+b: \, i \in Q \}$.
    \end{itemize}

    This immediately implies that $d^{\pm}(x + \sum\limits_{j \in \mathfrak{S}} \frac{1}{2^j}, 0) \geq d^{\pm}(x, 0) = c$ and completes the proof.
\end{proof}

Let us also recall simple properties of digit frequencies:

    \begin{enumerate}
        \item \label{prop1} $d^{\pm}(2^k x, i) = d^{\pm}(2^l + x, i)$ for any integers $k, l$;
        \item \label{prop2} $d^{\pm}(x, i) + d^{\mp}(x, 1-i) = 1$;
        \item \label{prop3} If $x \notin \mathbb{Z}[\frac{1}{2}]$, then $d^{\pm}(x, i) = d^{\pm}(-x, 1-i)$.
    \end{enumerate}

\vskip 0.2 cm

We are ready to prove Proposition \ref{freq12}.

\textit{Proof of Proposition \ref{freq12}.} Let $S = F_{\frac{1}{2}^-} \setminus \mathbb{Z}[\frac{1}{2}]$. Then, using the properties \ref{prop2} and \ref{prop3} listed above, it is easy to verify that $-S = \{ x: \,\, d^{+}(-x, 0) < \frac{1}{2} \}\setminus \mathbb{Z}[\frac{1}{2}]$ is a set disjoint with $S$. Let us define $Q$ under the condition  $S \sqcup (- S) \sqcup Q = \mathbb{R}$; in this case, $S \sqcup Q =  \{ x: \, d^+(x, 0) \geq \frac{1}{2} \} \cup \mathbb{Z}[\frac{1}{2}]$. 

Property \ref{prop1} shows that the sets $S$ and $Q$ satisfy the conditions of the Lemma \ref{symm} with $m = 2$. As in addition our sets are Borel, the $(\alpha, \beta)$-game on them is determined, and we can claim that 
\begin{itemize}
        \item If $\alpha \geq \beta$, then $(\alpha, \beta)$ is a losing point for $S$;
        \item If $\beta > \alpha$ and $\log_2 (\alpha \beta) \notin \mathbb{Q}$, then $(\alpha, \beta)$ is a winning point for $S \sqcup Q$.
    \end{itemize}
    
    Now let us fix $\xi$ as in Lemma \ref{shift}: $\xi$ is irrational, so by the aforementioned lemma $F_{\frac{1}{2}}^- +\xi \subseteq S$ 
 and $S \sqcup Q + \xi \subseteq \{ x: \, d^+(x, 0) \geq \frac{1}{2} \}$.  The following two observations complete the proof of Proposition \ref{freq12}:

 \begin{enumerate}
     \item Suppose $F_{\frac{1}{2}}^-$ is $(\alpha, \beta)$-winning for $\alpha \geq \beta$; then, by Lemma \ref{shiftgen}, $S$ should also be $(\alpha, \beta)$-winning, which is not true. Provided that the game is determined on $F_{\frac{1}{2}}^-$, we conclude that the set $F_{\frac{1}{2}}^-$ is $(\alpha, \beta)$-losing.

     \item Follows directly from Lemma \ref{shiftgen} and observations above.
 \end{enumerate} 
 \hfill \qedsymbol{}






\section{Proofs of the statements regarding d-Badly Approximable Numbers} \label{2ba}

We begin results in this section with a useful general lemma:

\begin{lem}\label{escape}
    Suppose Bob plays any interval $B_t$ and $\gamma>0$. Denote the center of $B_t$ by $c_t$. Then, Bob can always choose an interval $B_{t+1}$ such that $|c_{t+1} - c_{t}| \leq \frac{|B_t|}{2} \gamma$. As a corollary, Bob can guarantee that for any $m \in \mathbb{N}$, $|c_{t+m}-c_t| \leq \frac{|B_t|}{2(1-\alpha\beta)}\gamma$. Analogous results hold for Alice, with $\gamma$ replaced by $1 - 2 \beta + \alpha \beta$. 

\end{lem}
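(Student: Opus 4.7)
The plan is to verify the one-step bound by direct geometry and then iterate. After Alice plays $A_t \subseteq B_t$ with $|A_t| = \alpha|B_t|$, the center $c^A$ of $A_t$ may lie anywhere within distance $\tfrac{|B_t|}{2}(1-\alpha)$ of $c_t$. Bob's response $B_{t+1} \subseteq A_t$ must have size $\alpha\beta|B_t|$, and he is free to place its center anywhere within distance $\tfrac{\alpha|B_t|}{2}(1-\beta)$ of $c^A$. Bob's strategy will be to translate $B_{t+1}$ within $A_t$ so that $c_{t+1}$ is as close to $c_t$ as possible.

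In the worst case Alice puts $c^A$ at maximum distance from $c_t$, say $c^A = c_t + \tfrac{|B_t|}{2}(1-\alpha)$. Bob then slides $c_{t+1}$ back toward $c_t$ as far as $B_{t+1} \subseteq A_t$ permits, giving
\[
|c_{t+1} - c_t| \leq \tfrac{|B_t|}{2}(1-\alpha) - \tfrac{\alpha|B_t|}{2}(1-\beta) = \tfrac{|B_t|}{2}(1 - 2\alpha + \alpha\beta) = \tfrac{|B_t|}{2}\gamma_A.
\]
The hypothesis $\gamma_A > 0$ is precisely the condition that this extremal placement is binding; if $\gamma_A \leq 0$ the range of admissible centers around $c^A$ already contains $c_t$, and Bob can force $c_{t+1} = c_t$.

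The corollary then follows by iterating the one-step estimate and invoking the triangle inequality. Since each full round multiplies the interval length by $\alpha\beta$, one has $|B_{t+j}| = (\alpha\beta)^j |B_t|$, and Bob applies the same centering prescription at every round, so summing a geometric series yields
\[
|c_{t+m} - c_t| \leq \sum_{j=0}^{m-1} \tfrac{|B_{t+j}|}{2}\gamma_A = \tfrac{|B_t|}{2}\gamma_A \sum_{j=0}^{m-1} (\alpha\beta)^j \leq \tfrac{|B_t|}{2(1-\alpha\beta)}\gamma_A,
\]
as claimed. For the analogue on Alice's side the roles of $\alpha$ and $\beta$ swap: given $A_t$, Bob's center may drift by at most $\tfrac{|A_t|}{2}(1-\beta)$ from the center of $A_t$, and Alice's response $A_{t+1} \subseteq B_{t+1}$ lets her pull the next center back by at most $\tfrac{\beta|A_t|}{2}(1-\alpha)$; the identical worst-case computation gives displacement at most $\tfrac{|A_t|}{2}\gamma_B$ per round. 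There is no real obstacle here beyond carefully tracking who is the adversary in each statement and who is the optimizer, and identifying that the two alternating size-constraints combine into the single expression $\gamma_A$ (respectively $\gamma_B$).
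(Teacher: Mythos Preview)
Your proof is correct and follows the same approach as the paper: show the one-step bound by having Bob slide his interval back toward $c_t$ as far as Alice's choice permits, then iterate and sum the resulting geometric series. Your write-up is in fact more explicit than the paper's own proof, which merely says ``Bob will go the opposite way'' and writes down the series without spelling out the center computations.
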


\begin{proof} We prove Bob's case; Alice's case is analogous.

Let's denote center of $B_{t}$ by $c_t$. It is clear that Bob can always choose such a $B_{t+1}$ that $|c_{r+1} - c_{r}| \leq \frac{|B_t|}{2} \gamma$, since no matter which side Alice picks her interval on, Bob will go the opposite way. Extending this logic, it follows that $|c_{m+r} - (a_0 + k \delta)| \leq \frac{\rho (\alpha \beta)^m}{2} \gamma (1 + (\alpha \beta) + (\alpha \beta)^2 + \ldots) = \frac{\rho (\alpha \beta)^m}{2 (1 - \alpha \beta)} \gamma$, which completes the proof.

\end{proof}

Now we are ready to give the proofs of our main theorems.

\textit{Proof of Theorem \ref{bbalos}}
    
One has 

$$
\BBA(c)^c = \left\{ x: \,\, \forall N \,\, \exists \, n > N \,\, \text{and} \,\, k \, \in \, \mathbb{Z}: \,\, \left|x - \frac{k}{d^n}\right| \leq \frac{c}{d^n} \right\} = 
$$
$$
= \bigcap\limits_{N = 1}^{\infty} \bigcup\limits_{n=N}^{\infty} \left( \bigcup\limits_{k \in \mathbb{Z}} \left[\frac{k}{d^n} - \frac{c}{d^n}, \frac{k}{d^n} + \frac{c}{d^n}\right] \right) \subseteq \bigcap\limits_{N = 1}^{\infty} \bigcup\limits_{n=N}^{\infty} \left( \bigcup\limits_{k \in \mathbb{Z}} \left(\frac{k}{d^n} - \frac{c}{d^n}, \frac{k}{d^n} + \frac{c}{d^n}\right) \right), 
$$

and let $J_n = \bigcup\limits_{k \in \mathbb{Z}} \left(\frac{k}{d^n} - \frac{c}{d^n}, \frac{k}{d^n} + \frac{c}{d^n}\right)$. Fix $\e>0$ such that
$$
\e = \frac{2c(1-\beta)(1-\alpha\beta)}{\beta\gamma} - 1.
$$
Bob will play in the following way:

\begin{itemize}
    \item On his first turn, Bob will play an interval of length 
    $$
    \frac{1}{d} \left(\frac{2c(1-\alpha\beta)}{\gamma}\right)
    $$
    centered at $1/d$. By Lemma 1.2, Bob can play in such a way that in finitely many moves, $B_{s} \subset J_1$.
    \item On all subsequent turns, Bob plays arbitrarily until some turn $t$ such that
    \begin{align}
    \log_{d}\left(\frac{1}{1-\beta}\right) &\leq \left\{\left(t \cdot \log_d \alpha\beta\right) + \log_d |A_0| \right\} \label{a0}\\
    &\leq \log_{d}\left(\frac{1}{1-\beta}\right) + \log_d(1+\e), \nonumber
    \end{align}
    there exists $N \in \mathbb{N}$ such that
    \begin{equation}\label{a}
    \left(\frac{1}{1-\beta}\right) \leq d^N|A_t| \leq \left(\frac{1}{1-\beta}\right)(1+\e) = \left(\frac{2c(1-\alpha\beta)}{\beta\gamma}\right), 
    \end{equation}
    and since $|A_t|(1-\beta) > 1/d^N$, Bob can play his next interval centered at $m/d^N$ for some $m \in \mathbb{Z}$. Furthermore, by Lemma 1.2, Bob can play in such a way that in finitely many moves, $B_{t+s} \subset J_n$. By Lemma \ref{equidist}, there exists an infinite number of possible $t$ that satisfy inequality \ref{a}. Therefore, Bob can do this an infinite number of times, and Alice loses.

\end{itemize}
 \hfill \qedsymbol{}

\textit{Proof of Theorem \ref{2bawin}}
We will describe the winning strategy for Alice. Suppose on the step $t$ of the game Bob played an interval $B_t$ of length $l_t$. Pick $k_t$ such that 
\begin{equation}\label{ineqlt}
\frac{l_t}{2} \leq \frac{1}{2^{k_t}} < l_t \leq \frac{1}{2^{k_t-1}}.
\end{equation}

Alice's strategy is to choose a ball of length $\alpha l_t$ and center $c_t$ of the form $(r \pm \frac{1}{3})2^{-k_t}$ for some $r \in \Z$ as $A_t$. She can always do this, as for any two subsequent points $c_t^1, c_t^2$ of the chosen form one has
$$
\left|c^1_t - c^2_t\right| + \alpha l_t = \frac{1}{3 \cdot 2^{k_t - 1}} + \alpha l_t < \frac{2l_t}{3} + \frac{l_t}{3} = l_t.
$$

In some special cases, when needed, we will specify which of such balls Alice will need to pick.

Define $q_t$ to be the smallest integer such that $m/2^{q_t} \in A_t$ for some $m \in \Z$. Note that $q_t > k_t$, as no points of the form $\frac{s}{2^{k_t}}$ with $s \in \mathbb{Z}$ belong to $A_t$ by construction. One has
\begin{equation} \label{ineqqt}
    \left|\frac{m}{2^{q_t}} - c_t\right| =\frac{1}{3\cdot2^{q_t}} < \frac{\alpha l_t}{2} < \frac{1}{3\cdot2^{q_t-1}}.
\end{equation}

We wish to show that for all $i \geq 1$ and $x \in A_i$,
$$
\left|x-\frac{m}{2^n}\right| > \frac{c}{2^n} \ \text{for all} \ n \leq q_i-2
$$
by induction on the number of step $i$. Suppose that the statement holds for $i$, and Bob picks an interval $B_{i+1}$ with length $l_{i+1}=\alpha\beta l_i$. 

\begin{itemize}
    \item First, we consider $q_i \leq n \leq k_{i+1}$. Note that for all $x \in A_{i+1}$ and any $m \in \mathbb{Z}$
\begin{align*}
\left|x-\frac{m}{2^n}\right| \geq \left|x-\frac{m'}{2^{k_{i+1}}}\right| &> \frac{1}{3\cdot2^{k_{i+1}}} - \frac{\alpha l_{i+1}}{2} \\
&\geq \frac{l_{i+1}}{6} - \frac{\alpha l_{i+1}}{2} \\
&= \alpha \beta l_i \left(\frac{1-3\alpha}{6}\right) \\
&> \alpha l_i \left(\frac{9c}{1-3\alpha}\right)\left(\frac{1-3\alpha}{6}\right) \\
&= \frac{3}{2} \alpha l_ic \\
&> \frac{c}{2^{q_i}} \geq \frac{c}{2^n},
\end{align*}
where we used \ref{ineqqt} in the second to last inequality. 

\item To take care of $k_{i+1}+1 \leq n \leq q_{i+1}-2$, simply note that for any $x \in A_{i+1}$ and any $c<1/6$,
\begin{align*}
\left|x-\frac{m}{2^n}\right| &> \frac{1}{3\cdot2^n} - \frac{\alpha l_{i+1}}{2} > \frac{1}{3\cdot2^n} - \frac{1}{3 \cdot 2^{q_{i+1}-1}} > \frac{1}{6 \cdot 2^n} \geq \frac{c}{2^n}.
\end{align*}

Here, we again used \ref{ineqqt} in the second inequality.

\item Finally, consider $n=q_i-1$. There are two possible cases based on $A_i$, the prior move.
\begin{itemize}\item[\rm \textbf{Case I.}] If
$$
\frac{\alpha l_i}{2} < \frac{1}{3 \cdot 2^{q_i-1}} - \frac{c}{2^{q_i-1}},
$$
then for all $x \in A_i$ and $m \in \mathbb{Z}$
\begin{align*}
\left|x-\frac{m}{2^{q_i-1}}\right| &\geq \frac{1}{3\cdot2^{q_i-1}} - \frac{\alpha l_i}{2} > \frac{c}{2^{q_i-1}},
\end{align*}

which means that the desired inequality is already satisfied for any $x \in A_i$ and thus for any $x \in A_{i+1}$.

\item[\rm \textbf{Case II.}] Assume
$$
\frac{\alpha l_i}{2} \geq \frac{1}{3 \cdot 2^{q_i-1}} - \frac{c}{2^{q_i-1}}.
$$
If 
$$
B_{i+1} \cap \left[\frac{m-c}{2^{q_i-1}}, \frac{m+c}{2^{q_i-1}}\right] = \emptyset
$$
for any $m \in \mathbb{Z}$, it is clear that Alice does not have to worry about $n=q_i-1$, since for all $x \in A_{i+1} \subset B_{i+1}$, $|x-m/2^{q_i-1}|>c/2^{q_i+1}$. Thus, for the remainder of this case, assume  $B_{i+1}$ intersects such an interval.
Without loss of generality (the other case is symmetric), assume also that $m/2^{q_i-1} > x$ for all $x \in B_{i+1}$ (that is, the fraction lies to the right of the interval $B_{i+1}$). Alice will then choose an interval $A_{i+1}$ according to her strategy which is centered at $c_{i+1}$ such that
$$
c_{i+1} \leq \frac{m}{2^{q_i-1}}-\frac{2}{3}\left(\frac{1}{2^{k_{i+1}}}\right),
$$
that is, Alice can not choose the closest interval to $\frac{m}{2^{q_i-1}}$. This is always possible because the second closest interval of the desired form is within $l_{i+1}$ of $\frac{m}{2^{q_i-1}}$. Indeed: the left endpoint of the second closest interval of this form is 
$$e_{i+1} = \frac{m}{2^{q_i-1}} - \frac{2}{3}\left(\frac{1}{2^{k_{i+1}}}\right) - \frac{\alpha l_{i+1}}{2}, $$

while the left endpoint of $B_{i+1}$ is smaller than $\frac{m}{2^{q_i-1}} - l_{i+1} < e_i$, where the latter inequality holds as 
$$
\frac{2}{3}\left(\frac{1}{2^{k_{i+1}}}\right) + \frac{\alpha l_{i+1}}{2} < \frac{2}{3}l_{i+1} + \frac{1}{6}l_{i+1} < l_{i+1}
$$

by \ref{ineqlt}.

In this case, for $x \in A_{i+1}$ one has
\begin{align*}
\left|x-\frac{m}{2^{q_i-1}}\right| &> \frac{2}{3}\left(\frac{1}{2^{k_{i+1}}}\right)  - \frac{\alpha l_{i+1}}{2}\\
&> \frac{l_{i+1}}{3} - \alpha l_{i+1} \\
&= \alpha\beta l_i \left(\frac{1-3\alpha}{3}\right) \\
&> 3\alpha l_i c \\
&> \frac{c}{2^{q_i-1}},
\end{align*}
proving the desired inequality in this last case. This completes the induction, and thus the whole proof. 
\end{itemize} 

\end{itemize}

\hfill \qedsymbol{}

\section*{Acknowledgements}
We are very grateful to Prof.~Dmitry Kleinbock for the project proposal and his valuable suggestions in research paths. We would like to thank Prof.~Lior Fishman and Prof.~Stephen Jackson for fruitful discussions of the Schmidt game determinacy problems. We would also like to thank Prof.~Pavel Etingof, Dr.~Slava Gerovitch, Dr.~Felix Gotti, Dr.~Tanya Khovanova, and all the PRIMES organizers for providing the opportunity to conduct math research through this program.

\end{document}